 \newtheorem{theorem}{Theorem}[section]
 \newtheorem{lem}[theorem]{Lemma}
 \newtheorem{prop}[theorem]{Proposition}
\theoremstyle{remark} 
 \newtheorem{rem}{Remark}[section]
\begin{document}
\title[Life span of nonlinear Schr\"{o}dinger equation]{
On the life span of the Schr\"{o}dinger equation
with sub-critical power nonlinearity}
\author[H. Sasaki]{
Hironobu Sasaki$^*$ \\
Department of Mathematics and Informatics, Chiba University, 
263--8522, Japan.
}
\thanks{$^*$Supported by Research Fellowships of 
the Japan Society for the Promotion of Science 
for Young Scientists.}
\thanks{$^*$email:
\texttt{sasaki@math.s.chiba-u.ac.jp}} 
\subjclass[2000]{35Q55}
\keywords{Life span; Schr\"{o}dinger equation; 
sub-critical power nonlinearity} 
\maketitle
\begin{abstract}%\subsubsection*{abst}
We discuss the life span of 
the Cauchy problem for 
the one-dimensional Schr\"{o}dinger equation 
with a single power nonlinearity $\lambda |u|^{p-1}u$ 
($\lambda\in\mathbb{C}$, $2\le p<3$) prescribed 
an initial data of the form $\varepsilon\varphi$.
Here, $\varepsilon$ stands for the size of the data. 
It is not difficult to see that the life span $T(\varepsilon)$ 
is estimated by $C_0 \varepsilon^{-2(p-1)/(3-p)}$ 
from below, provided $\varepsilon$ is 
sufficiently small. 
In this paper, 
we consider a more precise estimate for $T(\varepsilon)$ and 
we prove that    
$\liminf_{\varepsilon\to 0}\varepsilon^{2(p-1)/(3-p)}T(\varepsilon)$ 
is larger than some positive constant
expressed only by $p$, $\mathrm{Im}\lambda$ and $\varphi$.
\end{abstract}
\section{Introduction}\label{intro}%\subsubsection*{equation}
This paper is concerned with 
the life span of solutions to
the Cauchy problem 
for the one-dimensional nonlinear Schr\"{o}dinger equation 
\begin{align}\label{IVP}
\left\{
  \begin{array}{ll}
i\partial_t u +\frac{1}{2}\partial_x^2 u=
\lambda|u|^{p-1}u
&\text{in $[0,\infty)\times\mathbb{R}$,}\\
u|_{t=0}=\varepsilon\varphi       
&\text{on $\mathbb{R}$.}\\
  \end{array}
\right. 
\end{align}
Here, 
$u=u(t,x)$ is a complex-valued unknown function, 
$(t,x)\in [0,\infty)\times\mathbb{R}$, 
$i=\sqrt{-1}$, 
$\partial_t=\partial/\partial t$,
$\partial_x=\partial/\partial x$,
$\varepsilon>0$, 
$\varphi$ belongs to some suitable function space,
$\lambda\in\mathbb{C}$ and 
$p>1$. 

In order to give the concrete definition of the life span,
we recall a standard result for (\ref{IVP}): 
If $1<p<5$ and $\lambda\in\mathbb{C}$, 
then (\ref{IVP}) is locally well-posed in $L^2(\mathbb{R})$
(See, e.g., Theorem 4.6.1 in \cite{Caz}). 
That is, for any $\varepsilon>0$ and $\varphi\in L^2(\mathbb{R})$, 
there exists some $T>0$ 
such that (\ref{IVP}) has a unique solution
$u\in C([0,T);L^2(\mathbb{R}))$.
Therefore, 
we can define \textit{the life span} $T(\varepsilon)$ of (\ref{IVP}) by 
\begin{align}\label{def:lifespan}
T(\varepsilon) 
=
\sup\{
T> 0;
\text{ (\ref{IVP}) has a unique solution }  
u \in C([0,T);L^2(\mathbb{R}))\}
\end{align} 
for any 
$p\in (1,5)$, 
$\lambda\in\mathbb{C}$, 
$\varphi\in L^2(\mathbb{R})$ and 
$\varepsilon>0$.
\begin{rem}\label{rem:other def}
We give some equivalent definitions of $T(\varepsilon)$. 
Let 
$H^1(\mathbb{R})$ be the Sobolev space defined by 
$H^1(\mathbb{R})=(1-\Delta)^{-1/2}L^2(\mathbb{R})$.  
For any $p>1$, 
$\lambda\in\mathbb{C}$, $\varepsilon>0$ 
and $\varphi\in H^1(\mathbb{R})$, 
(\ref{IVP}) is locally well-posed in $H^1(\mathbb{R})$, 
so that  
we can define a positive number 
$T^\prime(\varepsilon)$ by 
\begin{align*}
T^\prime(\varepsilon)
=
\sup\{
T> 0;
\text{ (\ref{IVP}) has a unique solution }  
u \in C([0,T);H^1(\mathbb{R}))\}.
\end{align*} 
Furthermore, 
if we use function spaces 
$\Sigma$ and $X(T)$ 
defined by 
(\ref{def:sigma}) and (\ref{def:X(T)}) below, 
respectively, 
then we see that 
for any $\varphi\in \Sigma$, 
we can define a positive number
$T^{\prime\prime}(\varepsilon)$
by 
\begin{align*}
T^{\prime\prime}(\varepsilon)
=
\sup\{
T> 0;
\text{ (\ref{IVP}) has a unique solution }  
u \in X(T)\}.
\end{align*} 
For any $p\in (1,5)$, 
$\lambda\in\mathbb{C}$ and $\varepsilon>0$, 
if 
$\varphi\in H^1(\mathbb{R})$ 
(resp. $\varphi\in \Sigma$), 
then  
$T^\prime(\varepsilon)$ 
(resp. $T^{\prime\prime}(\varepsilon)$) 
is equal to the life span $T(\varepsilon)$ 
(for the proof, 
see, e.g., Theorem 5.2.1 in \cite{Caz}).
\end{rem}

%\subsubsection{history} 
Before treating our problem, 
we mention some known results 
concerned with the life span of 
the Cauchy problem (\ref{IVP}) 
in the case $1<p<5$ and $\lambda\in\mathbb{C}$.

%\subsubsection{sub critical}
We first focus on the case 
$1<p<5$ and $\mathrm{Im}\,\lambda\le 0$. 
It is well-known that 
(\ref{IVP})  
is $L^2(\mathbb{R})$-sub-critical 
and that 
the time-local solution $u(t)$ to (\ref{IVP}) 
satisfies the a priori estimate 
$\| u(t)\|_{L^2(\mathbb{R})}\le \| \varphi\|_{L^2(\mathbb{R})}$. 
We hence see that 
(\ref{IVP}) is globally well-posed in $L^2(\mathbb{R})$,  
so that  
$T(\varepsilon)=\infty$.

%\subsubsection{short range}
We assume that  
$\varepsilon>0$ is sufficiently small 
and that $\varphi$ belongs to some suitable function space. 
It is clear that we have $T(\varepsilon)=\infty$ 
whenever $3<p<5$. 
Indeed, 
for any $3<p<5$ and $\lambda\in\mathbb{R}$, 
it has been proved  
that the time-local solution $u(t)$ to (\ref{IVP}) 
becomes time-global 
and goes to some free solution like  
$U(t)(\varepsilon\phi_+)$
as $t\to\infty$
(See, e.g., \cite{Gi1997,LinStrauss}), 
where 
$U(t)=\exp(it\Delta/2)$ is the free Schr\"{o}dinger propagator.
We can directly apply such methods to the case 
$3<p<5$ and $\lambda\in\mathbb{C}\setminus\mathbb{R}$.
 
%\subsubsection{long range}
In order to consider the remaining case  
$1<p\le 3$ and $\mathrm{Im}\,\lambda> 0$,  
we review some results of the asymptotic behavior for 
the solution to (\ref{IVP}) in the case 
$1<p\le 3$ and $\lambda\in\mathbb{C}$.
We again assume that  
$\varepsilon>0$ is sufficiently small 
and that $\varphi$ belongs to some suitable function space. 
If $1<p \le 3$, then 
$u(t)$ does not behave like any free solutions as $t\to\infty$. 
In the case $p=3$ (resp. $1<p<3$), 
if $\lambda\in\mathbb{R}$, then 
Hayashi--Naumkin \cite{HaNa}
(resp. Hayashi--Kaikina--Naumkin \cite{HaKaNa})
proved the existence of 
a time-global solution $u(t)$ tending to 
some modified free solution like 
$
\mathcal{F}^{-1}
\exp(i\Theta(t,\xi))
\mathcal{F}
U(t)(\varepsilon\phi_+)
$
as $t\to\infty$.
Here, 
$\mathcal{F}$ is the Fourier transform, 
$\Theta(t,\xi)=\lambda |\phi_+(\xi)|^{p-1}s_p(t)$
with $s_p(t)$ given by
\begin{align*}
s_p(t)=
\left\{
  \begin{array}{ll}
\varepsilon^2 \log t&\text{if $p=3$},    \\
\dfrac{2\varepsilon^{p-1} 
t^{(3-p)/2}}{3-p}&\text{if $1<p<3$}.    \\
  \end{array}
\right.
\end{align*} 
Let a complex-valued function $V(s,\xi)$ solve 
the Cauchy problem for 
a nonlinear ordinary differential equation 
\begin{align}\label{ODE-1}
\left\{
  \begin{array}{ll}
i\partial_s V(s,\xi)=\lambda |V(s,\xi)|^{p-1}V(s,\xi),&
(s,\xi)\in [0,B]\times\mathbb{R},\\
V(0,\xi)=e^{-i\pi/4}\phi_+(\xi),&    
\xi\in\mathbb{R},\\
  \end{array}
\right. 
\end{align}
where $B$ is some positive number.
Then we see that 
the modified free solution 
$
\mathcal{F}^{-1}
\exp(i\Theta(t,\xi))
\mathcal{F}
U(t)(\varepsilon\phi_+)
$
with $1<p\le 3$ is nearly equal to 
the function 
$m_p(t,x)=\varepsilon t^{-1/2}\exp(ix^2/2t)V(s_p(t),x/t)$ 
for sufficiently large $t>0$.
Recently, %\subsubsection{long range 2}
the case $1<p\le 3$ and $\mathrm{Im}\lambda<0$ 
is also studied. 
If $\mathrm{Im}\lambda<0$ and $p=3$ 
(resp. $\mathrm{Im}\lambda<0$ and 
$p$ is smaller than and sufficiently close to 3), 
Shimomura \cite{Shimomura} 
(resp. Kita-Shimomura \cite{KiShi}) showed 
the time-global existence and 
that the solution $u(t)$ behaves like $m_p(t)$  
as $t\to\infty$ 
(see also \cite{KiShi2}).
Furthermore, 
\cite{Shimomura,KiShi,KiShi2} proved  
the time-decay estimate 
\begin{align}\label{est:rapid}
\| u(t)\|_\infty \le 
\left\{
  \begin{array}{ll}
C(1+t)^{-1/(p-1)}       &\text{if $1<p<3$},    \\
C(1+t)^{-1/2}(\log(2+t))^{-1/2}&\text{if $p=3$},    \\
  \end{array}
\right. 
\end{align}  
which shows that $u(t)$ decays more rapidly than 
the corresponding free solution does.
The estimate (\ref{est:rapid}) essentially comes from 
\begin{align*}
\| V(s) \|_\infty \le C
(1-s \mathrm{Im}\lambda)^{-1/(p-1)}, 
\quad s\in [0,\infty).
\end{align*} 
On the other hand, %\subsubsection{long range 3}
in the case $1<p\le 3$ and $\mathrm{Im}\lambda>0$,
the function $V(s)$ blows up at some finite $s>0$.
Therefore, 
we can expect that 
$T(\varepsilon)<\infty$ 
even if $\varepsilon$ is small. 
In fact, 
it is given by Kita \cite{Kita} that 
some blow-up property holds
if   
$p$ and $\lambda$ 
satisfy 
$1<p\le 3$, 
$\mathrm{Im}\lambda>0$ and 
other suitable conditions.

Summarizing the above known results, 
we find that $p=3$ is the critical exponent 
with respect to the asymptotic behavior  
of the local solution to (\ref{IVP}). 
Furthermore, 
in the critical and the sub-critical cases $1<p\le 3$, 
it seems that 
the life span $T(\varepsilon)$ 
is different between the cases  
$\mathrm{Im}\lambda<0$ 
and 
$\mathrm{Im}\lambda>0$.\\

%\subsubsection*{Aim}
Let us focus on the problem (\ref{IVP}) 
in the sub-critical case 
$1<p<3$ and $\mathrm{Im}\,\lambda>0$.
Our aim of the present paper is to study  
the life span $T(\varepsilon)$. 
In particular, we consider 
the dependence of $T(\varepsilon)$ 
upon $\mathrm{Im}\lambda$. 
It can be easily shown    
that $T(\varepsilon)$ is estimated by 
\begin{align}\label{est:lifespan}
T(\varepsilon)\ge C_0\varepsilon^{-2(p-1)/(3-p)}
\end{align}
for some positive constant $C_0$.
In fact,  
introducing 
the space $\Sigma$ for initial data 
and 
the $X$-norm for solutions defined by  
\begin{align}\label{def:sigma}
\Sigma=
\left\{
\varphi\in L^2(\mathbb{R});
\| \varphi\|_\Sigma 
\equiv
\| \varphi \|_{L^2(\mathbb{R})} 
+
\| \partial_x\varphi \|_{L^2(\mathbb{R})}
+ 
\| x \varphi \|_{L^2(\mathbb{R})}
< \infty 
\right\}
\end{align} 
and 
\begin{align*}
\| u(t)\|_{X}
=
\| u(t)\|_{L^2(\mathbb{R})}
+
\| \partial_x u(t)\|_{L^2(\mathbb{R})}
+\| Ju(t)\|_{L^2(\mathbb{R})}, 
\quad
J=x+it\nabla, 
\end{align*} 
respectively 
and assuming that 
the time-local solution $u(t)$ satisfies 
\begin{align*}
\| u(t)\|_{X}\le 2\varepsilon\|\varphi\|_\Sigma,
\quad\text{$0<t<T$ ($T>0$)},
\end{align*} 
we see from the standard energy inequality that
\begin{align*}
\| u(t)\|_{X}
&\le 
\varepsilon \|\varphi\|_\Sigma +
C_1\int_0^T 
\| u(t)\|_{X}
\| u(t)\|_{L^\infty(\mathbb{R})}^{p-1}
dt\\
&\le 
\varepsilon \|\varphi\|_\Sigma +
C_1\int_0^T
(1+t)^{-(p-1)/2}
\| u(t)\|_{X}^p
dt\\
&\le 
\varepsilon \|\varphi\|_\Sigma +
C_1\varepsilon^p
T^{(3-p)/2}
\|\varphi\|_\Sigma^p
\end{align*}
for $0<t<T$.
Here, 
we have used (\ref{est:J}) below 
in the second inequality
and 
the positive constant $C_1$ depends  
only on $p$ and $|\lambda|$.
Therefore, if $C_0$ 
satisfies 
$C_1 C_0^{(3-p)/2}\|\varphi\|_\Sigma^{p-1}\le 1$,
then we see that 
$u(t)$ with 
$\| u(t)\|_{X}\le 2\varepsilon \|\varphi\|_\Sigma$
exists in 
$0<t<T_0:=C_0\varepsilon^{-2(p-1)/(3-p)}$,
which implies (\ref{est:lifespan}).

Unfortunately, we can not see the dependence of $T(\varepsilon)$ 
upon $\mathrm{Im}\lambda$ only by the proof of (\ref{est:lifespan}).
We hence have to prove a more precise estimate of $T(\varepsilon)$
to see such dependence.  
\subsection{Main result}\label{subsec:main result}
We remark that (\ref{est:lifespan}) is equivalent to 
\[
\liminf_{\varepsilon\to 0}\varepsilon^{2(p-1)/(3-p)}T(\varepsilon)>0.
\]
Our goal of this paper is to show 
a precise lower bound of 
$\liminf_{\varepsilon\to 0}\varepsilon^{2(p-1)/(3-p)}T(\varepsilon)$
and to see the dependence of $T(\varepsilon)$ 
upon $\mathrm{Im}\lambda$.  
To introduce our result, 
we define the Fourier transform $\widehat\phi$ by 
\begin{align*}
\widehat\phi(\xi)
=
\frac{1}{\sqrt{2\pi}}
\int_\mathbb{R}
e^{-ix\xi}\phi(x)dx,
\quad
\xi\in\mathbb{R}.
\end{align*} 
We are ready to mention our main result.
\begin{theorem}\label{thm:main}%\subsubsection{thm:main}
Let $2\le p<3$ and $\lambda\in\mathbb{C}$. 
Assume that $\mathrm{Im}\,\lambda>0$ and $(1+x^2)\varphi\in\Sigma$.
Let $T(\varepsilon)$ be the life span of (\ref{IVP}) 
defined by (\ref{def:lifespan}). 
Then we have 
\begin{align}\label{est:main}
\liminf_{\varepsilon\to 0}
\varepsilon^{2(p-1)/(3-p)}T(\varepsilon) 
\ge 
\left(
\frac{3-p}{2(p-1)(\mathrm{Im}\lambda)
\sup_{\xi\in\mathbb{R}}
|\widehat\varphi(\xi)|^{p-1}}
\right)^{2/(3-p)},
\end{align} 
where $\frac{1}{0}$ is understood as $+\infty$.
\end{theorem}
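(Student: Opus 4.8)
The plan is to make rigorous the heuristic recalled in the introduction: the solution of (\ref{IVP}) is governed, frequency by frequency, by the profile $V$ of (\ref{ODE-1}), and the life span is controlled from below by the first time at which $V$ blows up. Pairing (\ref{ODE-1}) with $\overline V$ and taking real parts (using $\mathrm{Re}(-i\lambda)=\mathrm{Im}\,\lambda$) shows that $|V(\cdot,\xi)|$ solves the closed ODE $\partial_s|V|=(\mathrm{Im}\,\lambda)|V|^{p}$ with initial modulus $|\widehat\varphi(\xi)|$; integrating it gives blow-up exactly at $s_*(\xi)=[(p-1)(\mathrm{Im}\,\lambda)|\widehat\varphi(\xi)|^{p-1}]^{-1}$, so the first blow-up over all frequencies is at $s_*=[(p-1)(\mathrm{Im}\,\lambda)M^{p-1}]^{-1}$, where $M=\sup_\xi|\widehat\varphi(\xi)|$, finite since $(1+x^2)\varphi\in\Sigma$ forces $\varphi\in L^1$. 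A direct computation shows that the equation $s_p(T)=s_*$ is precisely $\varepsilon^{2(p-1)/(3-p)}T=(\text{right-hand side of }(\ref{est:main}))$. Hence it suffices to prove that for each fixed $\delta\in(0,1)$ the solution persists on $[0,T_\delta(\varepsilon)]$, where $T_\delta$ is defined by $s_p(T_\delta)=(1-\delta)s_*$, for all sufficiently small $\varepsilon$; sending $\varepsilon\to0$ and then $\delta\to0$ will yield (\ref{est:main}), since $\varepsilon^{2(p-1)/(3-p)}T_\delta=(1-\delta)^{2/(3-p)}(\text{right-hand side of }(\ref{est:main}))$.

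To track the profile I would introduce $\psi(t)=\mathcal{F}U(-t)u(t)$, which satisfies the exact identity $i\partial_t\psi=\lambda e^{it\xi^2/2}\mathcal{F}[\,|u|^{p-1}u\,]$. Using the factorization $U(t)=\mathcal{M}(t)\mathcal{D}(t)\mathcal{F}\mathcal{M}(t)$, with $\mathcal{M}(t)=e^{ix^2/(2t)}$ and $\mathcal{D}(t)$ the dilation, and a stationary-phase expansion of the nonlinearity, I would isolate the leading term and write
\begin{align*}
i\partial_t\psi(t,\xi)=\lambda\,t^{(1-p)/2}|\psi(t,\xi)|^{p-1}\psi(t,\xi)+\lambda e^{it\xi^2/2}R(t,\xi),
\end{align*}
where $R$ gathers the error of replacing $u$ by $\mathcal{M}(t)\mathcal{D}(t)\psi$ inside $|u|^{p-1}u$ together with the residual $\mathcal{M}(t)$-factor. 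Pairing with $\overline\psi$ gives the pointwise inequality $\big|\,\partial_t|\psi|-(\mathrm{Im}\,\lambda)t^{(1-p)/2}|\psi|^{p}\,\big|\le|R|$, which under the change of variables $s=s_p(t)$ and the rescaling $\psi=\varepsilon W$ is a perturbation of the modulus ODE defining $|V|$; moreover the same representation yields the sharp decay $\|u(t)\|_{L^\infty}\le Ct^{-1/2}\|\psi(t)\|_{L^\infty_\xi}+(\text{error})$, which is the key improvement over the crude bound $\|u\|_{L^\infty}\le Ct^{-1/2}\|u\|_X$ used to prove (\ref{est:lifespan}).

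With these in hand I would run a continuity argument on $[0,T_\delta]$. Set $K_\delta=\sup_{0\le s\le(1-\delta)s_*}\|V(s)\|_{L^\infty_\xi}<\infty$ (finite because the first blow-up is at $s_*$), and bootstrap the two bounds $\|\psi(t)\|_{L^\infty_\xi}\le 2\varepsilon K_\delta$ and $\|u(t)\|_X\le 2C_\delta\varepsilon\|\varphi\|_\Sigma$. For the energy, the standard energy inequality $\frac{d}{dt}\|u\|_X\le C\|u\|_{L^\infty}^{p-1}\|u\|_X$ together with the sharp decay just noted gives $\frac{d}{dt}\|u\|_X\le Ct^{-(p-1)/2}\|\psi\|_{L^\infty_\xi}^{p-1}\|u\|_X$, and since $\varepsilon^{p-1}\int_0^{T_\delta}t^{-(p-1)/2}\,dt=(1-\delta)s_*$ by the definition of $T_\delta$, Gronwall yields $\|u(t)\|_X\le\varepsilon\|\varphi\|_\Sigma\exp\big(C(2K_\delta)^{p-1}(1-\delta)s_*\big)$, a constant independent of $\varepsilon$; this closes the $X$-bound. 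For the profile, integrating the inequality for $|\psi|$ and comparing pointwise in $\xi$ with the ODE for $|V|$ (which is Lipschitz on the range $\{|V|\le K_\delta\}$, hence away from blow-up) gives $\|\psi(t)\|_{L^\infty_\xi}\le\varepsilon K_\delta+C\int_0^{T_\delta}\|R(t)\|_{L^\infty_\xi}\,dt$; provided $\int_0^{T_\delta}\|R\|_{L^\infty_\xi}\,dt=o(\varepsilon)$ as $\varepsilon\to0$, the right-hand side is $\le\varepsilon K_\delta(1+o(1))<2\varepsilon K_\delta$, closing the profile bound. The solution therefore extends to $T_\delta$, i.e. $T(\varepsilon)\ge T_\delta(\varepsilon)$.

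The technical heart, and the step I expect to be hardest, is the remainder estimate $\|R(t)\|_{L^\infty_\xi}\le Ct^{-(p-1)/2-b}\|u(t)\|_X^{p}$ with a genuine decay gain $b>0$: combined with $\|u\|_X=O(\varepsilon)$ and $T_\delta\sim\varepsilon^{-2(p-1)/(3-p)}$, this gain forces $\int_0^{T_\delta}\|R\|_{L^\infty_\xi}\,dt=O(\varepsilon^{1+c})=o(\varepsilon)$ for some $c>0$, which is exactly what closes the bootstrap. Extracting $b$ requires controlling the difference between $|u|^{p-1}u$ and its stationary-phase leading part; this is where the hypothesis $2\le p<3$ enters, guaranteeing that $z\mapsto|z|^{p-1}z$ is $C^1$ with a H\"older-continuous derivative so that the difference can be estimated, and where $(1+x^2)\varphi\in\Sigma$ enters, since propagating this extra weight along the flow controls a second-order quantity (such as $\|J^2u(t)\|_{L^2}$) that supplies the additional time decay in $R$. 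The contribution of small times $t\lesssim1$, where the stationary-phase picture degenerates, is absorbed by the local well-posedness theory and contributes only $O(\varepsilon^{p})=o(\varepsilon)$.
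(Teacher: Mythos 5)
Your overall strategy is structurally sound and is genuinely different from the paper's. You run a Hayashi--Naumkin type bootstrap on the profile $\psi(t)=\mathcal{F}U(-t)u(t)$ of the \emph{true} solution and compare $|\psi|$ with the explicit ODE solution $|V|$, whereas the paper never derives a profile equation for $u$ at all: following Sunagawa, it builds an explicit approximate solution $u_a$ (free evolution for $t\lesssim 1/\varepsilon$, glued by a cutoff to $m(t,x)=\varepsilon t^{-1/2}M(t)V_\delta(s(t),x/t)$ for $t\gtrsim 1/\varepsilon$), proves $\|u_a(t)\|_X\lesssim\varepsilon$ and $\int_0^{T_B}\|\mathcal{L}u_a-\mathcal{N}(u_a)\|_X\,dt\lesssim\varepsilon^{3/2}+\varepsilon\mathcal{O}(\varepsilon^{1/4})$, and closes a single Gronwall estimate for $\|u-u_a\|_X$. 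Your reduction of the constant in (\ref{est:main}) to the ODE blow-up time $s_*$, your definition of $T_\delta$, and your Gronwall argument for the $X$-norm are all correct, and the bootstrap architecture (two norms, closing each with a constant loss) is the right shape.

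The genuine gap is that the entire content of the theorem is concentrated in the remainder estimate $\|R(t)\|_{L^\infty_\xi}\lesssim t^{-(p-1)/2-b}\|u(t)\|_X^{p}$ with $b>0$, and you assume it rather than prove it; everything else in your argument is soft. This is not a deferrable step: for $2\le p<3$ it is exactly where the difficulty handled by the paper's Sections \ref{sec:Preliminaries}--\ref{section:Approximate solution} lives, namely the limited smoothness of $z\mapsto|z|^{p-1}z$ and of $|\widehat\varphi|^{p-1}$. Moreover, the mechanism you propose --- propagating $\|J^2u(t)\|_{L^2}$ along the flow --- requires a second-order energy estimate, i.e.\ a bound on $\|J^2\mathcal{N}(u)\|_{L^2}$. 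Using $J=M(t)(it\partial_x)M(-t)$, this produces terms pointwise of size $|u|^{p-2}|Ju|^2$ (delicate at $p=2$, where $z\mapsto|z|z$ is only $C^{1,1}$ and the chain rule must be justified almost everywhere), whose $L^2$ norm forces you to control $\|Ju\|_{L^4}$, which in turn requires second-order quantities again; closing this auxiliary bootstrap is real work that the proposal does not carry out. (In fact the standard factorization trick gives the gain with first-order norms only, via $\|\mathcal{F}[(M(t)-1)w]\|_{L^\infty}\lesssim t^{-1/4}\|xw\|_{L^2}=t^{-1/4}\|Ju\|_{L^2}$, so $b=1/4$; but one must still bound $\|\partial_\xi(|h|^{p-1}h)\|_{L^2}$ for $h=\mathcal{F}M(t)U(-t)u$, which is where $p\ge 2$ enters, and your proposal invokes neither this route nor any substitute.) The paper's construction is designed precisely so that no such estimate on the unknown $u$ is ever needed: every higher derivative falls on the explicit profile, which is made smooth by replacing $|\widehat\varphi|^{p-1}$ with $\rho_\delta\ast|\widehat\varphi|^{p-1}$, $\delta=\varepsilon^{1/4}$ (Proposition \ref{prop:mollifier}), while the true solution is only ever measured in the first-order $X$-norm through (\ref{est:w1-w2}). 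Until your remainder estimate is proved in detail, the proposal is incomplete at its self-identified technical heart.
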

\begin{rem}
We see that the above estimate (\ref{est:main}) 
depends on $\mathrm{Im}\lambda$.
If $\varphi\in\Sigma$,  
then  
$\widehat\varphi$ is a bounded continuous function 
vanishing at infinity.
Therefore, 
$\sup_{\xi\in\mathbb{R}}
|\widehat\varphi(\xi)|^{p-1}$ 
is finite. 
\end{rem}
\begin{rem}
The estimates 
(\ref{est:O(delta)}),  
(\ref{est:w1-w2}), 
(\ref{est:Wexp(iG)}) and 
(\ref{est:Wpexp(iG)}) 
below
are essential to obtain 
main results. 
Unfortunately,  
such estimates can not be used 
in the case $1<p<2$. 
Therefore, 
in the case $1<p<2$ and $\mathrm{Im}\,\lambda>0$,
it is  still unknown whether 
(\ref{est:main}) holds, or not.
\end{rem}
In order to explain 
the estimate (\ref{est:main}) in detail, %\subsubsection*{Sunagawa} 
we introduce known results for 
the life span of classical solutions to 
the quasilinear Schr\"{o}dinger equation 
\begin{align}\label{cNLS}
\left\{
  \begin{array}{ll}
i\partial_t u +\frac{1}{2}\partial_x^2 u=
F(u,\partial_x u)
&\text{in $[0,\infty)\times\mathbb{R}$,}\\
u|_{t=0}=\varepsilon\phi       
&\text{on $\mathbb{R}$.}\\
  \end{array}
\right. 
\end{align}
Here, 
$\phi$ is sufficiently smooth and vanishes at infinity and 
$F$ is a gauge-invariant, cubic polynomial 
with respect to 
$u$, $\overline{u}$, $\partial_x u$ and $\overline{\partial_x u}$.
Let $S(\varepsilon)$ be the life span of the classical solution to 
(\ref{cNLS}). 
Then we see from Katayama--Tsutsumi \cite{KaTsu} that 
$
\liminf_{\varepsilon\to 0}\varepsilon^2\log S(\varepsilon) >0. 
$
Sunagawa \cite{Sunagawa} showed the following precise lower bound of 
$\liminf_{\varepsilon\to 0}\varepsilon^2\log S(\varepsilon)$:
\begin{align}\label{Sunagawa}
\liminf_{\varepsilon\to 0}
\varepsilon^2 \log S(\varepsilon) 
\ge
\dfrac{1}{
2\sup_{\xi\in\mathbb{R}}
|\widehat{\phi}(\xi)|^2
\mathrm{Im}F(1,i\xi)
}.
\end{align} 
From the estimate (\ref{Sunagawa}), 
we can expect some properties concerned with $S(\varepsilon)$. 
In particular,  
if either $\phi\equiv 0$ or 
\begin{align}\label{null}
\mathrm{Im}F(1,i\xi)\le 0,
\end{align}
then the right hand side of (\ref{Sunagawa}) is positive infinity  
and we hence expect that   
$S(\varepsilon)$ is much larger than $\exp(C/\varepsilon^2)$ 
for any $C>0$.  
In fact, 
Hayashi--Naumkin--Sunagawa \cite{HaNaSu}
recently proved 
the small data global existence
under the condition (\ref{null}). 
The estimate (\ref{Sunagawa}) is  
a (\ref{cNLS}) analogue of 
John and H\"{o}rmander's result   
concerned with quasilinear wave equations
(see \cite{John,Hormander}). 

Let us come back to the Cauchy problem (\ref{IVP}).
If $p=3$ and $\mathrm{Im}\lambda>0$, 
the result of \cite{Sunagawa} can be directly applicable 
to the (\ref{IVP}) cases. 
That is, it follows that 
\begin{align}\label{p=3}
\liminf_{\varepsilon\to 0}
\varepsilon^2 \log T(\varepsilon) 
\ge
\dfrac{1}{
2(\mathrm{Im}\lambda)
\sup_{\xi\in\mathbb{R}}
|\widehat{\varphi}(\xi)|^2
}.
\end{align} 
For any $2\le p<3$, 
the estimate (\ref{est:main}) 
can be understood as the (\ref{IVP}) version     
of (\ref{p=3}).
In fact, 
(\ref{p=3}) and (\ref{est:main}) are 
rewritten by the following form:
\begin{align*}
\liminf_{\varepsilon\to 0}
\int_1^{T(\varepsilon)}
\left(\frac{\varepsilon}{\sqrt{\tau}}\right)^{p-1}
d\tau
\ge 
\dfrac{1}{
(p-1)(\mathrm{Im}\lambda)
\sup_{\xi\in\mathbb{R}}
|\widehat{\varphi}(\xi)|^{p-1}
}.\\
\end{align*} 

%\subsection*{strategy}
We state our strategy for proving our main result.
The estimate (\ref{est:main}) 
formally follows from the method of 
\cite{Sunagawa}
(see also \cite{Hormander}, \cite{John}, etc.).
As the first step, 
we construct a suitable approximate solution $u_a(t,x)$ 
which is nearly equal to the modified free solution 
$m_p(t,x)=\varepsilon t^{-1/2}\exp(ix^2/2t)V(s_p(t),x/t)$,
where $V(s,\xi)$ solves the ordinary differential equation 
(\ref{ODE-1}) with $\phi_+=\widehat\varphi$. 
The function $m_p(t)$ is composed of the term  
$|\widehat{\varphi}|^{p-1}$ 
and 
the life span of $m_p(t)$ 
satisfies (\ref{est:main}).  
As the second step, 
we show an a priori estimate 
for the difference between $u(t)$ and $u_a(t)$, 
which enables us to see that 
$m_p(t)$ is close to $u(t)$ in some suitable sense.
However, 
in the sub-critical case $1<p<3$, 
some technical difficulty appears.  
In fact, 
although we have to treat higher order derivatives of $u_a(t)$ 
in the above second step,
the term $|\varphi|^{p-1}$
contained in $m_p(t,x)$
is not sufficiently smooth. 
In order to overcome such difficulty, 
we modify the first step. 
In more detail, 
we 
modify the original $u_a(t)$ 
by mollifying the power term  
$|\widehat\varphi|^{p-1}$. 
That is, we replace $|\widehat\varphi|^{p-1}$ by 
the mollified term 
$\rho_\delta\ast |\widehat\varphi|^{p-1}$,  
where 
$\ast$ is the convolution in $\mathbb{R}$ and 
$\rho_\delta$ ($\delta>0$) is some mollifier. 
Then we need to show that 
the modified $u_a$ is close to the original $u_a$ 
in some sense. 
For this purpose, we prove that 
the difference between 
$\rho_\delta\ast |\widehat\varphi|^{p-1}$ 
and 
$|\widehat\varphi|^{p-1}$ 
is estimated by 
\begin{align}\label{est:contents-1}
\| \rho_\delta\ast |\widehat\varphi|^{p-1} 
-
|\widehat\varphi|^{p-1}
\|_{H^1(\mathbb{R})}
\le \mathcal{O}(\delta), 
\end{align} 
where 
$\mathcal{O}$ is a non-negative increasing function 
tending to $0$ as $\delta\to 0$.
If we suitably take $\delta$ depending on $\varepsilon$, 
then   
we complete the modification of the above second step 
and hence the proof of (\ref{est:main}).

Listing the contents of this paper, 
we close this section. 
In Section \ref{sec:Preliminaries}, 
we state some preliminaries which will be useful  
to prove Theorem \ref{thm:main}.
In particular, the estimate (\ref{est:contents-1}) above is given.
In Section \ref{section:Approximate solution}, 
we next construct the modified $u_a(t,x)$ 
and prove some inequalities for the difference
between $u(t)$ and the modified $u_a(t)$.
In Section \ref{section:Proof of Theorem},
we finally show an a priori estimate 
which immediately implies 
Theorem \ref{thm:main}.
\section{Preliminaries}\label{sec:Preliminaries}
In this section, we show some preliminary properties
for proving Theorem \ref{thm:main}. 
For this purpose, we state some notation.
To consider derivatives of $|\widehat\varphi|^{p-1}$, 
we put a mollifier 
$\rho_\delta(x)=\delta^{-1}\rho(\delta^{-1}x)$  
for $\delta>0$. 
Here, $\rho$ is a smooth function 
on $\mathbb{R}$ satisfying 
$0\le\rho\le 1$,
$\mathrm{supp}\rho\subset (-1,1)$ 
and  
$\int_\mathbb{R}\rho(x)dx=1$. 
For $1\le q\le\infty$, 
we denote the $L^q(\mathbb{R})$-norm by $\|\cdot\|_q$. 
Recall 
the space $\Sigma$, 
the operator $J$ and  
the $X$-norm. 
For $T>0$, we define a set $X(T)$ by
\begin{align}\label{def:X(T)}
X(T)=
\left\{
w\in C([0,T);H^1({\mathbb{R}}));
Jw\in C([0,T);L^2({\mathbb{R}})), 
\sup_{t\in [0,T)}\| w(t)\|_{X}<\infty
\right\}.
\end{align} 
For multi-index 
$\alpha=(\alpha_1,\alpha_2)\in (\{ 0,1 \})^2$,
we set 
$Z^\alpha=\partial_x^{\alpha_1}J^{\alpha_2}$. 
Let $M(t)$ 
be a multiplication operator defined by 
\begin{align*}
M(t)=\exp{\left(\frac{x^2}{2t}\right)}.
\end{align*} 
Then we have the identity  
\begin{align}\label{identity:J}
J=M(t)(it\partial_x)M(-t).
\end{align}
The nonlinearity $\lambda |w|^{p-1}w$ 
is denoted by $\mathcal{N}(w)$.
For non-negative functions $f_1$ and $f_2$, 
we define $f_1\lesssim f_2$ 
if 
there exists some positive constant $C$ 
independent of 
$t$, $x$, $\varepsilon$ and $\delta$ 
such that 
$f_1\le Cf_2$.\\

The first proposition is proved by the standard argument  
(See, e.g., (2.5) in \cite{Sunagawa}). 
\begin{prop}\label{
prop:J-embedding}%\subsubsection{prop:J-embedding}
For any $w\in X(T)$, we have 
\begin{align}
\| w(t)\|_\infty 
\lesssim (1+t)^{-1/2}\| w(t)\|_{X}, 
\quad
t\in [0,T).
\label{est:J}
\end{align}
\end{prop}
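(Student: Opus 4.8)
The plan is to reduce everything to the one–dimensional Gagliardo--Nirenberg inequality
\[
\| f\|_\infty^2 \lesssim \| f\|_2\,\| \partial_x f\|_2,
\qquad f\in H^1(\mathbb{R}),
\]
which itself follows from $|f(x)|^2 = 2\,\mathrm{Re}\int_{-\infty}^x \overline{f}\,\partial_x f\,dy$ and the Cauchy--Schwarz inequality. Granting this, the proposition is just a matter of producing, for each fixed $t$, the correct power of $(1+t)$, and I would obtain it by treating the ranges $t\le 1$ and $t\ge 1$ separately.

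For $t\le 1$ I would apply the inequality directly to $f=w(t)$, which gives $\| w(t)\|_\infty^2 \lesssim \| w(t)\|_2\,\| \partial_x w(t)\|_2 \le \| w(t)\|_X^2$. Since $(1+t)^{-1}$ is bounded above and below by positive constants on $[0,1]$, this already yields the claimed bound on that range, and no decay is needed there.

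The essential range is $t\ge 1$, where the extra time decay has to be extracted from the operator $J$. Here I would conjugate by $M(t)$: setting $g=M(-t)w(t)$ and using that $M(\pm t)$ is multiplication by a factor of modulus one (so that $\| M(\pm t)h\|_\infty=\| h\|_\infty$ and $\| M(\pm t)h\|_2=\| h\|_2$ for every $h$), the factorization (\ref{identity:J}) gives $it\,\partial_x g = M(-t)Jw(t)$, whence $\| \partial_x g\|_2 = t^{-1}\| Jw(t)\|_2$. Applying Gagliardo--Nirenberg to $g$ and translating back through $M(t)$,
\[
\| w(t)\|_\infty^2 = \| g\|_\infty^2
\lesssim \| g\|_2\,\| \partial_x g\|_2
= t^{-1}\| w(t)\|_2\,\| Jw(t)\|_2
\lesssim t^{-1}\| w(t)\|_X^2,
\]
and $t^{-1}\lesssim (1+t)^{-1}$ on $[1,\infty)$ closes this range.

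Combining the two regimes gives $\| w(t)\|_\infty \lesssim (1+t)^{-1/2}\| w(t)\|_X$ uniformly on $[0,T)$, with the implied constant independent of $t$, $\varepsilon$ and $\delta$. I do not expect a genuine obstacle: the only substantive input is the identity (\ref{identity:J}), whose role is precisely to trade the spatial weight encoded in $J$ for a plain derivative accompanied by the gain $t^{-1}$; once that trade is made, the estimate is routine.
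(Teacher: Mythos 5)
Your proof is correct and is precisely the ``standard argument'' that the paper invokes by citing (2.5) of \cite{Sunagawa}: the one-dimensional Gagliardo--Nirenberg inequality $\| f\|_\infty^2\lesssim\| f\|_2\|\partial_x f\|_2$ combined with the factorization (\ref{identity:J}) (with $M(t)$ understood as the unimodular multiplier $e^{ix^2/2t}$), which converts $J$ into a spatial derivative with the gain $t^{-1}$, the ranges $t\le 1$ and $t\ge 1$ being treated separately. There is no gap, and no difference in approach worth noting.
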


In Section \ref{section:Approximate solution} below, 
we deal with an approximate solution containing 
the mollified term 
$\rho_\delta\ast |\widehat\varphi|^{p-1}$.
Then the following estimate is essential to treat it:  
\begin{prop}\label{prop:mollifier}%\subsubsection{prop:mollifier}
Let $2\le p<3$ and $\varphi\in\Sigma$. 
There exists a non-negative, increasing function 
$\mathcal{O}$ on $(0,\infty)$ such that
\begin{align}\label{conv:O(delta)}
\mathcal{O}(\delta)\to 0 
\quad\text{as $\delta\to 0$} 
\end{align}  
and 
\begin{align}\label{est:O(delta)}
\left\| (\rho_\delta\ast |\widehat\varphi|^{p-1})
-
 |\widehat\varphi|^{p-1}\right\|_{H^1(\mathbb{R})} 
\le 
\mathcal{O}(\delta).
\end{align} 
\end{prop}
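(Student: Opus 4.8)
The plan is to reduce everything to the single claim that $g:=|\widehat\varphi|^{p-1}$ belongs to $H^1(\mathbb{R})$; granting this, both the existence of $\mathcal{O}$ and the properties (\ref{conv:O(delta)})--(\ref{est:O(delta)}) follow from standard mollifier theory. Concretely, once $g\in H^1(\mathbb{R})$ I would set
\[
\mathcal{O}(\delta)=\sup_{0<\delta'\le\delta}\left\|(\rho_{\delta'}\ast g)-g\right\|_{H^1(\mathbb{R})}.
\]
This function is non-negative and non-decreasing by construction, and it dominates the left-hand side of (\ref{est:O(delta)}) since $\delta'=\delta$ is admissible in the supremum. Moreover, because $\partial_\xi(\rho_\delta\ast g)=\rho_\delta\ast\partial_\xi g$ and mollification converges in $L^2(\mathbb{R})$ on every $L^2$ function, we have $\rho_\delta\ast g\to g$ and $\rho_\delta\ast\partial_\xi g\to \partial_\xi g$ in $L^2(\mathbb{R})$, hence $\|(\rho_\delta\ast g)-g\|_{H^1(\mathbb{R})}\to0$; letting $\delta\to0$ then shrinks the supremum to $0$, which gives (\ref{conv:O(delta)}).

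The first substantive step is to record the regularity of $\widehat\varphi$. Since $\varphi\in\Sigma$, taking Fourier transforms of $\varphi,\partial_x\varphi,x\varphi\in L^2(\mathbb{R})$ shows $\widehat\varphi\in L^2(\mathbb{R})$, $\xi\widehat\varphi\in L^2(\mathbb{R})$ and $\partial_\xi\widehat\varphi\in L^2(\mathbb{R})$; in particular $\widehat\varphi\in H^1(\mathbb{R})$, and the one-dimensional embedding $H^1(\mathbb{R})\hookrightarrow L^\infty(\mathbb{R})$ yields $\|\widehat\varphi\|_\infty<\infty$ (as already noted in the Remark following Theorem \ref{thm:main}).

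The heart of the matter is therefore to prove $g=|\widehat\varphi|^{p-1}\in H^1(\mathbb{R})$, and this is where the hypothesis $2\le p$ is used. Writing $\widehat\varphi=a+ib$ with real $a,b\in H^1(\mathbb{R})$ and viewing $\widehat\varphi$ as a map into $\mathbb{R}^2$, we have $g=\Phi\circ(a,b)$ with $\Phi(y)=|y|^{p-1}$, whose gradient obeys $|\nabla\Phi(y)|=(p-1)|y|^{p-2}$. For $p\ge2$ this is bounded on the (bounded) range of $\widehat\varphi$ by $(p-1)\|\widehat\varphi\|_\infty^{p-2}$, so $\Phi$ is Lipschitz on that range; the chain rule for a Lipschitz function composed with an $H^1$ map then gives $g\in H^1(\mathbb{R})$ together with the pointwise bounds
\[
|g|\le \|\widehat\varphi\|_\infty^{p-2}\,|\widehat\varphi|,
\qquad
|\partial_\xi g|\le(p-1)\|\widehat\varphi\|_\infty^{p-2}\,|\partial_\xi\widehat\varphi|,
\]
both right-hand sides lying in $L^2(\mathbb{R})$ by the previous paragraph. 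For $1<p<2$ the gradient $|\nabla\Phi(y)|=(p-1)|y|^{p-2}$ blows up at the zeros of $\widehat\varphi$, so this argument breaks down --- exactly the obstruction recorded in the Remark.

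The main obstacle is thus the justification of the chain rule across the zero set of $\widehat\varphi$, where $\Phi$ fails to be $C^1$. I would handle this by regularizing, replacing $|\widehat\varphi|$ by $(|\widehat\varphi|^2+\eta^2)^{1/2}$, applying the ordinary smooth chain rule for each $\eta>0$, and passing to the limit $\eta\to0$: the bound $|\nabla\Phi|\le(p-1)\|\widehat\varphi\|_\infty^{p-2}$, valid precisely because $p\ge2$, furnishes an $L^2$-dominating function, so dominated convergence identifies the weak derivative of $g$ with the claimed expression. Everything else in the argument --- the $L^2$ convergence of the mollifications and the monotone-envelope definition of $\mathcal{O}$ --- is then routine.
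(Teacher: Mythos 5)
Your proposal is correct and follows essentially the same route as the paper: both reduce the statement to showing $|\widehat\varphi|^{p-1}\in H^1(\mathbb{R})$ (which is exactly where $p\ge 2$ and the bound $\||\widehat\varphi|^{p-2}\|_\infty\le\|\widehat\varphi\|_\infty^{p-2}$ enter), then invoke standard $H^1$-convergence of mollifications and define $\mathcal{O}$ as the monotone envelope $\sup_{0<\eta\le\delta}\|\rho_\eta\ast|\widehat\varphi|^{p-1}-|\widehat\varphi|^{p-1}\|_{H^1(\mathbb{R})}$ (the paper additionally caps it by $2\||\widehat\varphi|^{p-1}\|_{H^1(\mathbb{R})}$, a bound your version satisfies automatically by Young's inequality). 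Your regularization argument for the weak chain rule merely justifies in detail the derivative identity the paper states outright, so there is no substantive difference.
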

\begin{proof}
If $2\le p<3$, 
then the weak derivative of 
$|\widehat\varphi|^{p-1}$
is expressed by 
\begin{align}\label{identity:drivative-powerterm}
\partial_x |\widehat\varphi|^{p-1}=
\frac{p-1}{2}
|\widehat\varphi|^{p-3}
\mathrm{Re}
\left(
\widehat\varphi\ 
\overline{\partial_x \widehat\varphi}
\right) .
\end{align} 
Thus, we see that 
\begin{align*}
\| \partial_x |\widehat\varphi|^{p-1}\|_2 
\lesssim 
\| |\widehat\varphi|^{p-2}\|_\infty
\| \partial_x \widehat\varphi\|_2 
\lesssim 
\| \widehat\varphi\|_\infty^{p-2}
\| \partial_x \widehat\varphi\|_2 
\lesssim 
\| \varphi\|_\Sigma^{p-1},
\end{align*} 
where we have used  
the embedding 
$H^1(\mathbb{R})\hookrightarrow L^\infty(\mathbb{R})$,
the identity 
$\partial_x \widehat\varphi=-i\widehat{x\varphi}$ 
and 
the Plancherel theorem 
in the last inequality.
Hence it follows that 
$\partial_x |\widehat\varphi|^{p-1}\in L^2(\mathbb{R})$ 
and 
\begin{align*}
\| \rho_\delta\ast |\widehat\varphi|^{p-1}
-
|\widehat\varphi|^{p-1}\|_{H^1(\mathbb{R})} \to 0
\quad\text{as $\delta\to 0$.}
\end{align*} 
If we put for any $\delta$,
\begin{align*}
\mathcal{O}(\delta)
=
\min\left\{ 
2\| |\widehat\varphi|^{p-1}\|_{H^1(\mathbb{R})},
\sup_{0<\eta\le\delta}
\| \rho_\eta\ast |\widehat\varphi|^{p-1}
-
|\widehat\varphi|^{p-1}\|_{H^1(\mathbb{R})}
\right\},
\end{align*} 
then $\mathcal{O}$ 
is a non-negative, increasing function satisfying 
(\ref{conv:O(delta)}) and (\ref{est:O(delta)}).
\end{proof}
 
In Sections \ref{section:Approximate solution} and 
\ref{section:Proof of Theorem} below, 
we treat the $X$-norm of 
the difference of two nonlinearities
$\mathcal{N}(w_1) - \mathcal{N}(w_2)$.
Then the following estimate is useful:
\begin{prop}\label{prop:w1-w2}%\subsubsection{prop:w1-w2}
Let $2\le p<3$ and $\lambda\in\mathbb{C}$.
Suppose that $w_j \in X(T)$, $j=1,2$ and $T>0$.
Then we have
\begin{align}\label{est:w1-w2}
\| &\mathcal{N}(w_1(t))-
\mathcal{N}(w_2(t))\|_{X}\nonumber\\
&\lesssim
(1+t)^{-(p-1)/2}
\sup_{j=1,2}\| w_j(t)\|_{X}^{p-1}
\| w_1(t)-w_2(t)\|_{X},
\quad\text{$0\le t<T$.}
\end{align}
\end{prop}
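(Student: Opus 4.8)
The plan is to bound the three constituents of the $X$-norm separately, namely $\|\mathcal{N}(w_1)-\mathcal{N}(w_2)\|_2$, $\|\partial_x(\mathcal{N}(w_1)-\mathcal{N}(w_2))\|_2$ and $\|J(\mathcal{N}(w_1)-\mathcal{N}(w_2))\|_2$, reducing each to a pointwise inequality combined with the decay estimate (\ref{est:J}). The starting point is the elementary fact that $F(z)=|z|^{p-1}z$, regarded as a map of $(\mathrm{Re}\,z,\mathrm{Im}\,z)$, satisfies $|F(z_1)-F(z_2)|\lesssim(|z_1|^{p-1}+|z_2|^{p-1})|z_1-z_2|$. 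This already yields the $L^2$ bound: writing $|\mathcal{N}(w_1)-\mathcal{N}(w_2)|\lesssim(|w_1|^{p-1}+|w_2|^{p-1})|w_1-w_2|$ pointwise, placing the power factors in $L^\infty$ and the difference in $L^2$, and invoking (\ref{est:J}) to obtain $\|w_j\|_\infty^{p-1}\lesssim(1+t)^{-(p-1)/2}\|w_j\|_X^{p-1}$.

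For the derivative terms I would first record the Leibniz-type formula
\begin{align*}
\partial_x\mathcal{N}(w)=\tfrac{p+1}{2}\lambda|w|^{p-1}\partial_x w+\tfrac{p-1}{2}\lambda|w|^{p-3}w^2\,\overline{\partial_x w},
\end{align*}
obtained by the chain rule, together with its $J$-counterpart. The point for the $J$-term is that $\mathcal{N}$ is gauge invariant, so $M(-t)\mathcal{N}(w)=\mathcal{N}(M(-t)w)$; inserting this into the identity (\ref{identity:J}) and differentiating gives
\begin{align*}
J\mathcal{N}(w)=\tfrac{p+1}{2}\lambda|w|^{p-1}Jw-\tfrac{p-1}{2}\lambda|w|^{p-3}w^2\,\overline{Jw}.
\end{align*}
Thus $J$ acts on $\mathcal{N}(w)$ exactly as $\partial_x$ does, with $\partial_x w$ replaced by $Jw$. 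Since $\|\partial_x w_j\|_2$ and $\|Jw_j\|_2$ are both controlled by $\|w_j\|_X$, the two derivative estimates become formally identical, and it suffices to treat $Z^\alpha$ for $\alpha\in\{(1,0),(0,1)\}$ by one and the same computation.

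Subtracting the formula for $w_2$ from that for $w_1$, I would split each resulting difference by telescoping, e.g.
\begin{align*}
|w_1|^{p-1}Z^\alpha w_1-|w_2|^{p-1}Z^\alpha w_2
=|w_1|^{p-1}Z^\alpha(w_1-w_2)+(|w_1|^{p-1}-|w_2|^{p-1})Z^\alpha w_2,
\end{align*}
and similarly for the $|w|^{p-3}w^2$ coefficient. The first piece is handled as in the $L^2$ case. For the second piece I need the pointwise bounds
\begin{align*}
\bigl||w_1|^{p-1}-|w_2|^{p-1}\bigr|+\bigl||w_1|^{p-3}w_1^2-|w_2|^{p-3}w_2^2\bigr|\lesssim(|w_1|^{p-2}+|w_2|^{p-2})|w_1-w_2|.
\end{align*}
Placing $Z^\alpha w_2$ in $L^2$ and the remaining factors in $L^\infty$, the power factor contributes $(1+t)^{-(p-2)/2}\sup_j\|w_j\|_X^{p-2}$ and the difference contributes $(1+t)^{-1/2}\|w_1-w_2\|_X$ through (\ref{est:J}); since $-(p-2)/2-1/2=-(p-1)/2$, both the decay rate and the product $\sup_j\|w_j\|_X^{p-1}\|w_1-w_2\|_X$ come out exactly.

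The main obstacle is the displayed pointwise Lipschitz estimate, and it is precisely here that the hypothesis $p\ge 2$ is indispensable: the weight $|z|^{p-2}$ has a nonnegative exponent only when $p\ge 2$, so that $z\mapsto|z|^{p-1}$ and $z\mapsto|z|^{p-3}z^2$ are genuinely Lipschitz (against this weight) near the origin, whereas for $1<p<2$ they are merely H\"older of order $p-1$ and the splitting fails. One should also note that $|z|^{p-3}z^2$, despite the negative power $p-3$, extends continuously by $0$ at $z=0$ since its modulus is $|z|^{p-1}$, so that all the products above are well defined. Granting these elementary but slightly delicate estimates, the assembly of the pieces is routine bookkeeping of $L^2$--$L^\infty$ H\"older factors.
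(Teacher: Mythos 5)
Your proposal is correct and follows essentially the same route as the paper: the same decomposition of the $X$-norm, the same Leibniz formulas for $\partial_x\mathcal{N}$ and (via gauge invariance and the identity $J=M(t)(it\partial_x)M(-t)$) for $J\mathcal{N}$, the same telescoping of the differences (up to a symmetric reordering of which factor carries the difference), and the same pointwise Lipschitz bounds that the paper isolates as Lemma \ref{lem:appendix} and combines with (\ref{est:J}). The only deviation is that you grant those pointwise estimates rather than prove them, but they are exactly the content of the paper's Lemma \ref{lem:appendix} (proved there by the mean value theorem in polar coordinates), and your identification of $p\ge 2$ as the hypothesis that makes them work matches the paper's reasoning.
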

\begin{proof}
By a direct calculation, 
we obtain 
\begin{align*}
\partial_x &\mathcal{N}(w_1) 
-
\partial_x\mathcal{N}(w_2) \\
&=
\frac{p+1}{2}
\left(
|w_1|^{p-1}-|w_2|^{p-1}
\right)
\partial_x w_1 
+
\frac{p+1}{2}
|w_2|^{p-1}
\left(
\partial_x w_1-
\partial_x w_2
\right) \\
&\quad +
\frac{p-1}{2}
\left(
|w_1|^{p-3}w_1^2 -|w_2|^{p-3}w_2^2
\right)
\overline{\partial_x w_1} 
+
\frac{p-1}{2}
|w_2|^{p-3}w_2^2
\overline{\left(
\partial_x w_1-
\partial_x w_2
\right)}.
\end{align*} 
Using the identity (\ref{identity:J}),
it follows that 
\begin{align*}
J &\mathcal{N}(w_1) 
-
J \mathcal{N}(w_2) \\
&=
M(t)(it)\partial_x
\left(
\mathcal{N}(M(-t)w_1(t))
-
\mathcal{N}(M(-t)w_2(t))
\right)\\
&=
\frac{p+1}{2}
\left(
|w_1|^{p-1}-|w_2|^{p-1}
\right)
M(t)(it)\partial_x M(-t) w_1(t)\\
&\quad +
\frac{p+1}{2}
|w_2|^{p-1}
M(t)(it)\partial_x M(-t)
(w_1-w_2) \\
&\quad +
\frac{p-1}{2}
M(t)(it)
\left(
|w_1|^{p-3}(M(-t)w_1)^2 
- 
|w_2|^{p-3}(M(-t)w_2)^2
\right)
\overline{\partial_x M(-t)w_1}\\
&\quad +
\frac{p-1}{2}
M(t)(it)
|w_2|^{p-3}(M(-t)w_2)^2
\overline{
\partial_x M(-t)(w_1-w_2)}\\
&=
\frac{p+1}{2}
\left(
|w_1|^{p-1}-|w_2|^{p-1}
\right)
J w_1 
+
\frac{p+1}{2}
|w_2|^{p-1}
\left(
J w_1-
J w_2
\right) \\
&\quad -
\frac{p-1}{2}
\left(
|w_1|^{p-3}w_1^2 -|w_2|^{p-3}w_2^2
\right)
\overline{J w_1} 
-
\frac{p-1}{2}
|w_2|^{p-3}w_2^2
\overline{\left(
J w_1-
J w_2
\right)}.
\end{align*}  
Therefore, we obtain 
\begin{align*}
\| & \mathcal{N}(w_1) 
-
 \mathcal{N}(w_2) 
\|_{X}\\
&\lesssim 
\| |w_1(t)|^{p-1}-|w_2(t)|^{p-1}\|_\infty 
\| w_1(t)\|_{X}
+
\| w_2\|_\infty^{p-1}
\| w_1(t)-w_2(t)\|_{X}\\
&\quad +
\| |w_1(t)|^{p-3}w_1(t)^2
-
|w_2(t)|^{p-3}w_2(t)^2\|_\infty 
\| w_1(t)\|_{X}.
\end{align*} 
To complete the proof of the proposition, 
we mention the following lemma:
\begin{lem}\label{lem:appendix}
Let $q\ge 2$. 
For any $\alpha_1,\alpha_2\in\mathbb{C}$, 
we have 
\begin{align}
\left| |\alpha_1|^{q-1} 
-
|\alpha_2|^{q-1} \right|
&\lesssim 
|\alpha_1-\alpha_2| 
\left(
|\alpha_1|+|\alpha_2|
\right)^{q-2},\label{est:appendix-1}\\
\left| 
|\alpha_1|^{q-3}\alpha_1^2 
-
|\alpha_2|^{q-3}\alpha_2^2 \right|
&\lesssim 
|\alpha_1-\alpha_2| 
\left(
|\alpha_1|+|\alpha_2|
\right)^{q-2}. \label{est:appendix-2}
\end{align} 
\end{lem}
From Lemma \ref{lem:appendix} 
and (\ref{est:J}), 
we have (\ref{est:w1-w2}).
\end{proof}
We now prove Lemma \ref{lem:appendix} above.
Let $q\ge 2$ and $j=1,2$. 
We set $\alpha_j=r_j e^{i\theta_j}$, 
where $r_j>0$ and $\theta_j\in (-\pi,\pi]$.
It follows from the mean value theorem that
\begin{align}\label{est:appendix-1-1}
|r_1^{q-1}-r_2^{q-1}|
\lesssim 
|r_1-r_2|(r_1+r_2)^{q-2}.
\end{align} 
Since 
\begin{align}\label{est:appendix-1-2}
|r_1-r_2|\le |\alpha_1-\alpha_2|, 
\end{align}
we obtain (\ref{est:appendix-1}).
On the other hand, we see that    
\begin{align*}
| &r_1^{q-1}e^{2i\theta_1} 
-
r_2^{q-1}e^{2i\theta_2} | \\ 
&\lesssim
|r_1^{q-1}e^{2i\theta_1}-r_2^{q-1}e^{2i\theta_1}|
+
r_2^{q-1}
|e^{2i\theta_1}-e^{2i\theta_2}| \\
&\lesssim
|r_1^{q-1}-r_2^{q-1}|
+
r_2^{q-1}
|e^{i\theta_1}-e^{i\theta_2}|
|e^{i\theta_1}+e^{i\theta_2}| \\
&\lesssim
|r_1^{q-1}-r_2^{q-1}|
+
r_2^{q-2}
|r_1 e^{i\theta_1}-r_2 e^{i\theta_2}|
+
r_2^{q-2}
|r_1 e^{i\theta_1}-r_2 e^{i\theta_1}| \\
&\lesssim
|r_1^{q-1}-r_2^{q-1}|
+
r_2^{q-2}
|r_1 e^{i\theta_1}-r_2 e^{i\theta_2}|
+
r_2^{q-2}
|r_1 -r_2| .
\end{align*}
From (\ref{est:appendix-1-1}) and (\ref{est:appendix-1-2}), 
we have (\ref{est:appendix-2}). 
\section{Approximate solution
}\label{section:Approximate solution}
In this section, 
we suppose that $2\le p<3$ and $\mathrm{Im}\lambda >0$, 
we define an approximate solution $u_a(t,x)$ 
and show some estimates  
dividing five subsections.
Our goal of this subsection is to prove 
the following two inequalities 
which are important to show Theorem \ref{thm:main}:  
\begin{align}\label{est:u-a}
\| u_a(t)\|_{X}\lesssim \varepsilon, 
\quad 0<t<T_B(\varepsilon)
\end{align}
and
\begin{align}\label{est:int-R}
\int_0^{T_B(\varepsilon)}\| R(t)\|_{X} dt 
\lesssim
\varepsilon^{3/2}
+
\varepsilon\mathcal{O}(\varepsilon^{1/4}),
\end{align}
where  
\begin{align}\label{def:T-B}
T_B(\varepsilon)
=
\left(\frac{(3-p)B}{2\varepsilon^{p-1}}
\right)^{2/(3-p)},
\end{align} 
$B$ is some positive number, 
$R=\mathcal{L}u_a - \mathcal{N}(u_a)$ 
and 
$\mathcal{L}=i\partial_t +\frac{1}{2}\partial_x^2$.
Inequalities (\ref{est:u-a}) and (\ref{est:int-R})
are shown in Subsections 
\ref{step4} and \ref{step5}, 
respectively.

\subsection{Definition of $V(t,x)$}\label{step1}
Assume that $(1+x^2)\varphi\in\Sigma$. 
We consider 
an ordinary differential equation 
\begin{align}\label{ODE}
\left\{
  \begin{array}{ll}
i\partial_s V(s,\xi)
=
\mathcal{N}(V(s,\xi)),&
(s,\xi)\in [0,B]\times\mathbb{R},\\
V(0,\xi)=e^{-i\pi/4}\widehat\varphi(\xi),&
\xi\in\mathbb{R}\\
  \end{array}
\right.
\end{align} 
for some $B>0$. 
We define $A\in (0,\infty]$ by
\begin{align}\label{def:A}
A^{-1}
=
(p-1)(\mathrm{Im}\lambda)
\sup_{\xi\in\mathbb{R}}
|\widehat\varphi(\xi)|^{p-1}.
\end{align} 
Then the solution to (\ref{ODE}) with $B\in (0,A)$ 
is expressed by 
\begin{align*}
V(s,\xi)
=
W(s,\xi)^{-1/(p-1)}\exp(iG(s,\xi))\widehat\varphi(\xi).
\end{align*} 
Here, 
\begin{align*}
W(s,\xi)
&=
1-(p-1)\mathrm{Im}\lambda|\widehat\varphi(\xi)|^{p-1}s,\\
G(s,\xi)
&=
-\mathrm{Re}\lambda
|\widehat\varphi(\xi)|^{p-1}
\int_0^s W(\sigma,\xi)^{-1}d\sigma
-\frac{\pi}{4}.
\end{align*}
In order to prove (\ref{est:int-R}), 
we need to estimate $\partial_{\xi}^3 V(s,\xi)$
(see Subsection \ref{step2}). 
However, $V(s,\cdot)$ generally 
does not belong to $C^2(\mathbb{R})$. 
Therefore, we have to mollify $V$.
For $\delta>0$, we define $V_\delta(s,\xi)$ by
\begin{align*}
V_\delta(s,\xi)
=
W_\delta(s,\xi)^{-1/(p-1)}
\exp(iG_\delta(s,\xi))
\widehat\varphi(\xi).
\end{align*} 
Here, 
\begin{align*}
W_\delta(s,\xi)
&=
1-(p-1)\mathrm{Im}\lambda
(\rho_\delta\ast|\widehat\varphi|^{p-1})(\xi)s,\\
G_\delta(s,\xi)
&=
-\mathrm{Re}\lambda
(\rho_\delta\ast|\widehat\varphi|^{p-1})(\xi)
\int_0^s W_\delta(\sigma,\xi)^{-1}d\sigma
-\frac{\pi}{4}.
\end{align*}
Then we see the following property of $V_\delta$:
\begin{prop}\label{prop:V}%\subsubsection{prop:V}
Let $2\le p<3$, $\lambda\in\mathbb{C}$, 
$\delta>0$ and $B\in (0,A)$. 
Assume that $\mathrm{Im}\,\lambda>0$ and $(1+x^2)\varphi\in\Sigma$.
Then we have 
$W_\delta^{-1/(p-1)}\exp(iG_\delta)
\in C^\infty([0,B]\times\mathbb{R})$. 
Furthermore, it follows that 
for $l=0,1$ and $m=0,1,2,3$, 
\begin{align}\label{est:Wexp(iG)}
\sup_{(s,\xi)\in [0,B]\times\mathbb{R}}
\left|
\partial_s^l \partial_{\xi}^{m} 
\left(
W_\delta^{-1/(p-1)}(s,\xi)
\exp(iG_\delta(s,\xi))
\right)
\right|
\lesssim \delta^{\min\{0,1-m\}}
\end{align}
and that 
\begin{align}\label{identity:V}
i&\partial_s V_\delta(s,\xi)
-
\mathcal{N}(V_\delta(s,\xi))\nonumber\\
&=
\lambda W_\delta^{-p/(p-1)}(s,\xi)
\exp(iG_\delta(s,\xi))
\widehat\varphi(\xi)
\left(\rho_\delta\ast|\widehat\varphi|^{p-1}(\xi)-
|\widehat\varphi|^{p-1}(\xi)\right).
\end{align}
\end{prop}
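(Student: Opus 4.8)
The plan is to reduce all three assertions to elementary properties of the mollified power $g_\delta:=\rho_\delta\ast|\widehat\varphi|^{p-1}$, which is the only non-smooth ingredient hidden inside $W_\delta$ and $G_\delta$. First I would record two structural facts about $W_\delta$. Since $\rho_\delta\ge 0$ and $\int_{\mathbb{R}}\rho_\delta=1$, the convolution $g_\delta$ is a weighted average of $|\widehat\varphi|^{p-1}$, so $0\le g_\delta(\xi)\le\sup_{\eta}|\widehat\varphi(\eta)|^{p-1}$ for every $\xi$. Together with $s\le B<A$ and the definition \eqref{def:A} of $A$, this gives the two-sided bound $1-B/A\le W_\delta(s,\xi)\le 1$ on $[0,B]\times\mathbb{R}$; in particular $W_\delta$ is bounded away from $0$ uniformly in $\delta$. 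Because $g_\delta\in C^\infty(\mathbb{R})$ (a mollification of a bounded function), $W_\delta$ is smooth in $(s,\xi)$ and hence so is $W_\delta^{-1/(p-1)}$; differentiating $I_\delta(s,\xi):=\int_0^s W_\delta(\sigma,\xi)^{-1}\,d\sigma$ under the integral sign then shows $G_\delta$ and $\exp(iG_\delta)$ are smooth too. This settles the claimed $C^\infty$ regularity of $W_\delta^{-1/(p-1)}\exp(iG_\delta)$.

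For the derivative estimates \eqref{est:Wexp(iG)}, the crucial observation is that $h:=|\widehat\varphi|^{p-1}$ already possesses one bounded derivative. Using \eqref{identity:drivative-powerterm} and the bound $|\widehat\varphi|^{p-2}\le\|\widehat\varphi\|_\infty^{p-2}$, which is available precisely because $p\ge 2$, together with $\partial_\xi\widehat\varphi=-i\widehat{x\varphi}\in L^\infty(\mathbb{R})$ (the hypothesis $(1+x^2)\varphi\in\Sigma$ forces $x\varphi\in L^1$), I would conclude $\partial_\xi h\in L^\infty(\mathbb{R})$. Writing $\partial_\xi^m g_\delta=(\partial_\xi^{m-1}\rho_\delta)\ast\partial_\xi h$ for $1\le m\le 3$ and invoking Young's inequality with $\|\partial_\xi^{m-1}\rho_\delta\|_{L^1}\lesssim\delta^{-(m-1)}$ then gives $\|\partial_\xi^m g_\delta\|_\infty\lesssim\delta^{\min\{0,1-m\}}$, while $\|g_\delta\|_\infty\lesssim 1$. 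The gain over the naive $\delta^{-m}$ comes from letting one derivative fall on $h$ rather than on the mollifier.

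I would then propagate this scaling through $W_\delta^{-1/(p-1)}$, $I_\delta$, $G_\delta$ and $\exp(iG_\delta)$ by the chain and Leibniz rules, using that $W_\delta$ is bounded above and below and that $G_\delta$ is bounded (so $|\exp(iG_\delta)|=1$ and all composite factors are harmless). In each such expansion the leading $\delta$-power is produced by the single highest-order factor, of type $\partial_\xi^m(\cdot)$, whereas every cross term distributes the $m$ derivatives among several factors and hence carries a strictly milder (less negative) power; consequently the product $W_\delta^{-1/(p-1)}\exp(iG_\delta)$ inherits exactly $\delta^{\min\{0,1-m\}}$. The estimate remains uniform for $l=0,1$ because $\partial_s$ only brings down the bounded quantities $g_\delta$ and $W_\delta^{-1}$.

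The identity \eqref{identity:V} I would obtain by a direct computation. From $\partial_s W_\delta=-(p-1)(\mathrm{Im}\lambda)g_\delta$ one finds $\partial_s(W_\delta^{-1/(p-1)})=(\mathrm{Im}\lambda)g_\delta W_\delta^{-p/(p-1)}$, and $\partial_s G_\delta=-(\mathrm{Re}\lambda)g_\delta W_\delta^{-1}$; combining these and using $\mathrm{Im}\lambda-i\,\mathrm{Re}\lambda=-i\lambda$ yields $i\partial_s V_\delta=\lambda g_\delta W_\delta^{-p/(p-1)}\exp(iG_\delta)\widehat\varphi$. On the other hand $|V_\delta|^{p-1}=W_\delta^{-1}|\widehat\varphi|^{p-1}$ since $|\exp(iG_\delta)|=1$, so $\mathcal{N}(V_\delta)=\lambda|\widehat\varphi|^{p-1}W_\delta^{-p/(p-1)}\exp(iG_\delta)\widehat\varphi$; subtracting leaves the factor $g_\delta-|\widehat\varphi|^{p-1}=\rho_\delta\ast|\widehat\varphi|^{p-1}-|\widehat\varphi|^{p-1}$, which is exactly \eqref{identity:V}. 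The main obstacle is the sharp $\delta$-scaling in \eqref{est:Wexp(iG)}: securing $\delta^{-1}$ and $\delta^{-2}$ for $m=2,3$ (rather than $\delta^{-2}$ and $\delta^{-3}$) rests entirely on the $L^\infty$-bound for $\partial_\xi h$ — the place where $p\ge 2$ and the extra decay $(1+x^2)\varphi\in\Sigma$ are genuinely used — and on verifying that no cross term in the composite derivatives beats the leading power; the remaining bookkeeping is routine.
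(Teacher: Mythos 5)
Your proposal is correct and follows essentially the same route as the paper: the lower bound $W_\delta\ge 1-B/A$ via the averaging property of $\rho_\delta$, the key estimate $\|\partial_\xi^m(\rho_\delta\ast|\widehat\varphi|^{p-1})\|_\infty\lesssim\delta^{1-m}$ obtained by putting one derivative on $|\widehat\varphi|^{p-1}$ (using \eqref{identity:drivative-powerterm}, $p\ge 2$ and $(1+x^2)\varphi\in\Sigma$) and the rest on the mollifier via Young's inequality, and a direct computation for \eqref{identity:V}. Your write-up merely makes explicit the Leibniz/chain-rule bookkeeping that the paper leaves implicit.
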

\begin{proof}
For $\mathrm{Im}\lambda>0$ and 
$(s,\xi)\in [0,B]\times\mathbb{R}$,
we see from the H\"{o}lder-Young inequality that  
\begin{align*}
W_\delta(s,\xi)
&\ge 
1-(p-1)\sup_{\xi\in\mathbb{R}}\mathrm{Im}\lambda
(\rho_\delta\ast|\widehat\varphi|^{p-1})(\xi)s\\
&\ge 
1-(p-1)\sup_{\xi\in\mathbb{R}}\mathrm{Im}\lambda
|\widehat\varphi|^{p-1}(\xi)s\\
&\ge 
1-B/A.
\end{align*}
Therefore, we obtain 
\begin{align}\label{est:1/W}
0\le W_\delta^{-1}(s,\xi) \le \frac{A}{A-B}.
\end{align}
Thus,  
$V_\delta$ is well-defined on 
$[0,B]\times\mathbb{R}$.

Using the H\"{o}lder-Young inequality and 
(\ref{identity:drivative-powerterm}),
we have for $m=1,2,\cdots$, 
\begin{align*}
\| 
\partial_x^m (\rho_\delta\ast|\widehat\varphi|^{p-1})
\|_\infty
&=
\| 
\partial_x^{m-1}\rho_\delta \ast
\partial_x|\widehat\varphi|^{p-1} 
\|_\infty\\
&\lesssim 
\delta^{1-m}
\| 
\partial_x|\widehat\varphi|^{p-1} 
\|_\infty \\
&\lesssim 
\delta^{1-m}
\| 
\widehat\varphi
\|_\infty^{p-2} 
\| 
\partial_x \widehat\varphi
\|_\infty \\
&\lesssim 
\delta^{1-m}
\| 
(1+x^2)\varphi
\|_\Sigma. 
\end{align*}  
Then it follows
from the definition of $W_\delta$ and $G_\delta$ that 
$W_\delta^{-1/(p-1)}\exp(iG)\in 
C^\infty([0,B]\times\mathbb{R})$
and that (\ref{est:Wexp(iG)}) holds.
The identity (\ref{identity:V}) 
is given by a direct calculation.
\end{proof}
\begin{rem}
From the proof of the above proposition, 
we immediately see that  
\begin{align}\label{est:Wpexp(iG)}
\sup_{(s,\xi)\in [0,B]\times\mathbb{R}}
\left| 
\partial_{\xi}^{m} 
\left(
W^{-p/(p-1)}_\delta(s,\xi)\exp(iG(s,\xi))
\right)
\right|
\lesssim 1,
\end{align} 
for  
$m=0,1$,
which is applied later.
\end{rem}
\subsection{Definition of $m(t,x)$ and $Q(t,x)$
}\label{step2}
Assume that $\delta>0$, $B\in (0,A)$ and  
$(t,x)\in (1,T_B(\varepsilon)]\times\mathbb{R}$.
Let $m=m(t,x)$ be a function defined by
\begin{align*}
m(t,x)
=
\frac{\varepsilon M(t)}{t^{1/2}}V_\delta(s(t),\xi(t,x)),
\end{align*} 
where
\begin{align*}
s(t)
=
\int_0^t
\left(
\frac{\varepsilon}{\tau^{1/2}}
\right)^{p-1}
d\tau
=
\frac{2\varepsilon^{p-1}t^{(3-p)/2}}{3-p} 
\quad\text{and}\quad
\xi(t,x)
=
\frac{x}{t}.
\end{align*} 
Furthermore, we define a function $Q=Q(t,x)$ by 
\begin{align*}
Q(t,x)
=
\mathcal{L}m(t,x)
-
\mathcal{N}(m(t,x)).
\end{align*} 
We see from (\ref{identity:V}) that $Q$ is expressed by 
\begin{align}\label{identity:Q}
Q(t,x)
&=
i\frac{\varepsilon M(t)}{t^{1/2}}
\left\{
-\frac{ix^2}{2t^2}-
\frac{1}{2t}-
\frac{x}{t^2}\partial_{\xi}+
\varepsilon^{p-1}t^{-(p-1)/2}\partial_s
\right\}
V_\delta(s(t),\xi(t,x))\nonumber\\
&\quad +
\frac{1}{2}\frac{\varepsilon M(t)}{t^{1/2}}
\left\{
\frac{-x^2}{t^2}+
\frac{i}{t}+
\frac{2ix}{t^2}\partial_{\xi}+
\frac{1}{t^2}\partial_{\xi}^2
\right\}
V_\delta(s(t),\xi(t,x))\nonumber\\
&=
\frac{\varepsilon^pM(t)}{t^{p/2}}
\left\{
i\partial_s V_\delta(s(t),\xi(t,x))
\right\} +
\frac{\varepsilon M(t)}{2t^{5/2}}
\partial_{\xi}^2 V(s(t),\xi(t,x))\nonumber\\
&=
\frac{\lambda\varepsilon^pM(t)}{t^{p/2}}
\left(
W_\delta^{-p/(p-1)}\exp(iG_\delta)
\widehat\varphi
(\rho_\delta\ast|\widehat\varphi|^{p-1}-
|\widehat\varphi|^{p-1})
\right) (s(t),\xi(t,x))\nonumber\\
&\quad +
\frac{\varepsilon M(t)}{2t^{5/2}}
\partial_{\xi}^2 V_\delta(s(t),\xi(t,x))\\
&=:Q_1(t,x)+Q_2(t,x)\nonumber.
\end{align}
\subsection{Estimates of $m(t,x)$ and $Q(t,x)$
}\label{step3}
In this subsection, 
we assume that 
$\delta>0$, $B\in (0,A)$ and 
$(t,x)\in (1,T_B(\varepsilon)]\times\mathbb{R}$, 
and we estimate 
the $X$-norm of $m(t)$ and $Q(t)$. 
The first derivative of $m$ is given by 
\begin{align*}
\partial_x m(t,x)
&=
\partial_x\left\{
\frac{\varepsilon M(t)}{t^{1/2}}
\left(\left(W^{-1/(p-1)}\exp(iG_\delta)\right)
\left(s(t),\left(\frac{x}{t}\right)\right)\right)
\left(\widehat\varphi\left(\frac{x}{t}\right)\right)
\right\}\\
&=
\frac{\varepsilon M(t)}{t^{1/2}}
\left(\left(W^{-1/(p-1)}\exp(iG_\delta)\right)
\left(s(t),\left(\frac{x}{t}\right)\right)\right)
\left(\frac{ix}{t}
\widehat\varphi\left(\frac{x}{t}\right)\right)\\
&\quad +
\frac{\varepsilon M(t)}{t^{1/2}}
\left(\frac{1}{t}\partial_{\xi}
\left(W^{-1/(p-1)}\exp(iG_\delta)\right)
\left(s(t),\left(\frac{x}{t}\right)\right)\right)
\left(\widehat\varphi\left(\frac{x}{t}\right)\right)\\
&\quad +
\frac{\varepsilon M(t)}{t^{1/2}}
\left(
\left(W^{-1/(p-1)}\exp(iG_\delta)\right)
\left(s(t),\left(\frac{x}{t}\right)\right)\right)
\left(\frac{1}{t}\partial_{\xi}
\widehat\varphi\left(\frac{x}{t}\right)\right) .
\end{align*}
The identity (\ref{identity:J}) implies that 
\begin{align*}
J m(t,x)
&=
\frac{i\varepsilon M(t)}{t^{1/2}}
\left(\partial_{\xi}
\left(W^{-1/(p-1)}\exp(iG_\delta)\right)
\left(s(t),\left(\frac{x}{t}\right)\right)\right)
\left(\widehat\varphi\left(\frac{x}{t}\right)\right)\\
&\quad +
\frac{i\varepsilon M(t)}{t^{1/2}}
\left(
\left(W^{-1/(p-1)}\exp(iG_\delta)\right)
\left(s(t),\left(\frac{x}{t}\right)\right)\right)
\left(\partial_{\xi}
\widehat\varphi\left(\frac{x}{t}\right)\right) .
\end{align*}  
From (\ref{est:Wexp(iG)}), we obtain 
\begin{align}\label{est:m}
\| m(t)\|_{X} 
&\lesssim
\frac{\varepsilon}{t^{1/2}}
\left(
\left\|
\widehat\varphi\left(\frac{\cdot}{t}\right)
\right\|_2
+
\left\|
\frac{\cdot}{t}
\widehat\varphi\left(\frac{\cdot}{t}\right)
\right\|_2
+
\left\|
\partial_\xi 
\widehat\varphi\left(\frac{\cdot}{t}\right)
\right\|_2
\right) \nonumber\\
&\lesssim 
\varepsilon
\|
\varphi
\|_\Sigma 
\lesssim 
\varepsilon.
\end{align} 
We next estimate $Q_1(t,x)$. 
It follows that 
\begin{align*}
&\partial_x Q_1(t,x)\\
&=
\partial_x\left\{
\frac{\lambda\varepsilon^p M(t)}{t^{p/2}}
\left(\left(W^{-p/(p-1)}\exp(iG_\delta)\right)
\left(s(t),\left(\frac{x}{t}\right)\right)\right)
\left(\left( 
\widehat\varphi
\left(
\rho_\delta\ast|\widehat\varphi|^{p-1}
-
|\widehat\varphi|^{p-1}
\right)\right)
\left(\frac{x}{t}\right)\right)
\right\}\\
&=
\frac{\lambda\varepsilon^p M(t)}{t^{p/2}}\\
&\ \times 
\biggl\{
\left(\left
(W^{-p/(p-1)}\exp(iG_\delta)\right)
\left(s(t),\left(\frac{x}{t}\right)\right)\right)
\left(
\left(
\rho_\delta\ast|\widehat\varphi|^{p-1}
-
|\widehat\varphi|^{p-1}
\right)
\left(\frac{x}{t}\right)\right)
\left( 
\frac{ix}{t}\widehat\varphi
\left(\frac{x}{t}\right)
\right)\\
&\quad +
\left(\frac{1}{t}\partial_{\xi}
\left(W^{-p/(p-1)}\exp(iG_\delta)\right)
\left(s(t),\left(\frac{x}{t}\right)\right)\right)
\left(
\left(
\rho_\delta\ast|\widehat\varphi|^{p-1}
-
|\widehat\varphi|^{p-1}
\right)
\left(\frac{x}{t}\right)\right)
\left( 
\widehat\varphi
\left(\frac{x}{t}\right)
\right)\\
&\quad +
\left(\left(W^{-p/(p-1)}\exp(iG_\delta)\right)
\left(s(t),\left(\frac{x}{t}\right)\right)\right)
\left(\frac{1}{t}\partial_{\xi}
\left(
\rho_\delta\ast|\widehat\varphi|^{p-1}
-
|\widehat\varphi|^{p-1}
\right)
\left(\frac{x}{t}\right)\right)
\left( 
\widehat\varphi
\left(\frac{x}{t}\right)
\right)\\
&\quad +
\left(\left(W^{-p/(p-1)}\exp(iG_\delta)\right)
\left(s(t),\left(\frac{x}{t}\right)\right)\right)
\left(
\left(
\rho_\delta\ast|\widehat\varphi|^{p-1}
-
|\widehat\varphi|^{p-1}
\right)
\left(\frac{x}{t}\right)\right)
\left(\frac{1}{t}\partial_{\xi}
\widehat\varphi
\left(\frac{x}{t}\right)
\right)\biggr\}
\end{align*}  
and that
\begin{align*}
&J Q_1(t,x)\\
&=
\frac{\varepsilon^p M(t)}{t^{p/2}}\\
&\ \times 
\biggl\{
\left(\partial_{\xi}
\left(W^{-p/(p-1)}\exp(iG_\delta)\right)
\left(s(t),\left(\frac{x}{t}\right)\right)\right)
\left(
\left(
\rho_\delta\ast|\widehat\varphi|^{p-1}
-
|\widehat\varphi|^{p-1}
\right)
\left(\frac{x}{t}\right)\right)
\left( 
\widehat\varphi
\left(\frac{x}{t}\right)
\right)\\
&\quad +
\left(\left(W^{-p/(p-1)}\exp(iG_\delta)\right)
\left(s(t),\left(\frac{x}{t}\right)\right)\right)
\left(\partial_{\xi}
\left(
\rho_\delta\ast|\widehat\varphi|^{p-1}
-
|\widehat\varphi|^{p-1}
\right)
\left(\frac{x}{t}\right)\right)
\left( 
\widehat\varphi
\left(\frac{x}{t}\right)
\right)\\
&\quad +
\left(\left(W^{-p/(p-1)}\exp(iG_\delta)\right)
\left(s(t),\left(\frac{x}{t}\right)\right)\right)
\left(
\left(
\rho_\delta\ast|\widehat\varphi|^{p-1}
-
|\widehat\varphi|^{p-1}
\right)
\left(\frac{x}{t}\right)\right)
\left(
\widehat\varphi
\left(\frac{x}{t}\right)
\right)\biggr\}.
\end{align*} 
We see from (\ref{est:Wpexp(iG)}) 
and (\ref{est:O(delta)}) that 
\begin{align*}
\| Q_1(t)\|_{X}
&\lesssim
\varepsilon^p
t^{-\frac{p}{2}}
\mathcal{O}(\delta)
\left(
\left\|
\widehat\varphi\left(\frac{\cdot}{t}\right)
\right\|_\infty
+
\left\|
\frac{\cdot}{t}
\widehat\varphi\left(\frac{\cdot}{t}\right)
\right\|_\infty
+
\left\|
\partial_\xi 
\widehat\varphi\left(\frac{\cdot}{t}\right)
\right\|_\infty
\right) \\
&\lesssim
\varepsilon^p
t^{-(p-1)/2}
\mathcal{O}(\delta)
\| (1+x^2)\varphi\|_\Sigma.
\end{align*} 
By (\ref{est:Wexp(iG)}), 
the remainder $Q_2(t,x)$ is estimated by 
\begin{align*}
\| Q_2(t)\|_{X}
\lesssim
\varepsilon
t^{-2}\delta^{-2}.
\end{align*} 
Hence we obtain
\begin{align}\label{est:Q}
\| Q(t)\|_{X}
\lesssim 
\varepsilon^p
t^{-(p-1)/2}
\mathcal{O}(\delta)
+
\varepsilon
t^{-2}\delta^{-2}.
\end{align} 
\subsection{Definition of $u_a(t,x)$ and $R(t,x)$
}\label{step4}
Assume that 
$\delta>0$ and  
$B\in (0,A)$.  
Let $\chi$ be a smooth function on $\mathbb{R}$ 
satisfying 
$0\le \chi \le 1$, 
$\chi(t)=1$ if $t\le 1$
and 
$\chi(t)=0$ if $t\ge 2$.
For $\varepsilon>0$ 
and  
$(t,x)\in (0,T_B(\varepsilon)]\times\mathbb{R}$, 
we put 
\begin{align*}
u_a(t,x)
=
\chi(\varepsilon t)U(t)(\varepsilon\varphi(x))
+
(1-\chi(\varepsilon t))m(t,x),
\end{align*}
where $U(t)$ is the free Schr\"{o}dinger propagator.
That is, 
$u_{0,\varepsilon}(t)=U(t)(\varepsilon\varphi)$ solves 
\begin{align*}
\left\{
  \begin{array}{ll}
i\partial_t u_{0,\varepsilon} +
\frac{1}{2}\partial_x^2 u_{0,\varepsilon}=0
&\text{in $[0,\infty)\times\mathbb{R}$,}\\
u_{0,\varepsilon}|_{t=0}=\varepsilon\varphi       
&\text{on $\mathbb{R}$.}\\
  \end{array}
\right. 
\end{align*}
From (\ref{est:m}) and the standard equality
\begin{align}\label{equality:free}
\| U(t)(\varepsilon\varphi)\|_{X}=
\varepsilon\| \varphi\|_\Sigma,
\end{align} 
we have (\ref{est:u-a}). 

Let $R=R(t,x)$ be a function defined by 
\begin{align*}
R(t,x)
=
\mathcal{L}u_a(t,x)
-
\mathcal{N}(u_a(t,x)), 
\quad 
(t,x)\in (0,T_B(\varepsilon)]\times\mathbb{R}.
\end{align*} 
\subsection{Estimates of $R(t,x)$}\label{step5}
As the final step, we estimate $R$.
Henceforth, we fix $\delta=\varepsilon^{1/4}$.
If $t\in (0,1/\varepsilon]$, 
then 
$R(t,x)=-\chi(\varepsilon t)
\mathcal{N}(U(t)(\varepsilon\varphi(x)))$. 
Therefore, it follows from 
(\ref{est:w1-w2}) and (\ref{equality:free}) that 
\begin{align}\label{est:R-1}
\| R(t)\|_{X}
\lesssim
\varepsilon^p t^{-(p-1)/2},
\quad
t\in (0,1/\varepsilon].
\end{align}
Thus, we have 
\begin{align}\label{est:int-R-1}
\int_{0}^{1/\varepsilon}
\| R(t)\|_{X}dt
\lesssim
\varepsilon^{3/2},
\end{align}
where we have used the condition $p\ge 2$.

In the case $t\in [2/\varepsilon,T_B(\varepsilon)]$, 
$R(t,x)$ is equal to $Q(t,x)$. 
We hence obtain 
\begin{align*}
\| R(t)\|_{X}
\lesssim 
\varepsilon^p
t^{-(p-1)/2}
\mathcal{O}(\delta)
+
\varepsilon
t^{-2}\delta^{-2},
\quad
t\in [2/\varepsilon,T_B(\varepsilon))
\end{align*}
and 
\begin{align}\label{est:int-R-3}
\int_{2/\varepsilon}^{T_B(\varepsilon)}
\| R(t)\|_{X}dt
\lesssim
\varepsilon\mathcal{O}(\varepsilon^{1/4})
+
\varepsilon^{3/2}.
\end{align}

Let us consider the case 
$t\in (1/\varepsilon,2/\varepsilon)$. 
Then we have 
\begin{align}\label{identity:R-2}
R(t,x)
&=
i\varepsilon\chi^\prime(\varepsilon t)
(u_{0,\varepsilon}(t,x)-m(t,x))\nonumber\\
&\quad + 
(1-\chi(\varepsilon t))
(\mathcal{N}(m(t,x))-\mathcal{N}(u_a(t,x)))\nonumber\\
&\quad -
\chi(\varepsilon t)\mathcal{N}(u_a(t,x))\nonumber\\
&\quad +
(1-\chi(\varepsilon t))Q(t,x).
\end{align}
It is well-known that 
the free solution $u_{0,\varepsilon}$ 
is expressed by 
\begin{align*}
U(t)(\varepsilon\varphi(x))
=
\frac{\varepsilon M(t)}{\sqrt{2\pi it}}
\int_{-\infty}^\infty
\exp\left( -\frac{ixy}{t}\right)
\exp\left( \frac{iy^2}{2t}\right)
\varphi(y)dy.
\end{align*} 
Therefore, we obtain for any 
$t\in (1/\varepsilon,2/\varepsilon)$, 
\begin{align*}
U(t)(\varepsilon\varphi(x))-m(t,x)
&=
\frac{\varepsilon M(t)}{t^{1/2}}
\left\{
V_\delta\left( 0,\frac{x}{t} \right)
-
V_\delta\left( s(t),\frac{x}{t} \right)
\right\}\\
&\quad +
\frac{\varepsilon M(t)}{\sqrt{2\pi it}}
\int_{-\infty}^\infty
\exp\left( -\frac{ixy}{t}\right)
\left\{ \exp\left( \frac{iy^2}{2t}\right)-1 \right\}
\varphi(y)dy \\
&=
\frac{2\varepsilon M(t)}{(3-p)t^{1/2}}
t^{(3-p)/2}
\varepsilon^{p-1}
\left\{
-\int_0^1
\partial_s
V_\delta\left( s(t)\theta,\frac{x}{t} \right)
d\theta
\right\}\\
&\quad +
\frac{\varepsilon M(t)}{\sqrt{2\pi it}}
\int_{-\infty}^\infty
\exp\left( -\frac{ixy}{t}\right)
\left\{ \exp\left( \frac{iy^2}{2t}\right)-1 \right\}
\varphi(y)dy \\
&=:
f_1(t,x)+f_2(t,x).
\end{align*} 
We see from (\ref{est:Wexp(iG)}) that 
\begin{align*}
\| f_1(t)\|_{X}
\lesssim
\varepsilon^p t^{(3-p)/2},
\quad 
t\in (1/\varepsilon,2/\varepsilon).
\end{align*} 
For the other term $f_2$,  
the following estimate was shown
by \cite{Sunagawa}:
\begin{align*}
\| f_2(t)\|_{X}
\lesssim
\varepsilon t^{-1},
\quad 
t>0.
\end{align*} 
Thus, we obtain for any 
$t\in (1/\varepsilon,2/\varepsilon)$, 
\begin{align}\label{est:R-2-1}
\| U(t)(\varepsilon\varphi)-m(t)\|_{X}
\lesssim
\varepsilon^{3/2},
\end{align}
where we have used the relation 
$1/\varepsilon \le t\le 2/\varepsilon$ 
and $p\ge 2$.
By (\ref{est:w1-w2}) and (\ref{est:u-a}), 
we have
\begin{align*}
\| \mathcal{N}(m(t))-\mathcal{N}(u_a(t)) \|_{X}
\lesssim
\varepsilon^{p-1}
t^{-(p-1)/2}
\| m(t)-u_a(t)\|_{X},
\quad 
t\in (1/\varepsilon,2/\varepsilon).
\end{align*} 
Since $m-u_a=\chi(\varepsilon t)(m-u_{0,\varepsilon})$,
it follows from (\ref{est:R-2-1}) that 
\begin{align}\label{est:R-2-2}
\| \mathcal{N}(m(t))-\mathcal{N}(u_a(t)) \|_{X}
\lesssim
\varepsilon^3 ,
\quad 
t\in (1/\varepsilon,2/\varepsilon),
\end{align}
where we have used the relation 
$1/\varepsilon \le t\le 2/\varepsilon$ 
and $p\ge 2$ again.
By the same argument as in the proof of (\ref{est:R-1}), 
we see that 
\begin{align}\label{est:R-2-3}
\| \mathcal{N}(u_a(t,x)) \|_{X}
\lesssim 
\varepsilon^{5/2},
\quad 
t\in (1/\varepsilon,2/\varepsilon).
\end{align} 
Therefore, it follows from 
(\ref{identity:R-2})--(\ref{est:R-2-3}) and 
(\ref{est:Q})
that 
\begin{align}\label{est:int-R-2}
\int_{1/\varepsilon}^{2/\varepsilon}
\| R(t)\|_{X} dt 
\lesssim 
\varepsilon^{3/2}(1+\mathcal{O}(\varepsilon^{1/4})).
\end{align} 

We are ready to show (\ref{est:int-R}).
The estimates (\ref{est:int-R-1}), 
(\ref{est:int-R-2}) and (\ref{est:int-R-3}) 
enable us to see that 
\begin{align*}
\int_0^{T_B(\varepsilon)}
\| R(t)\|_{X} dt 
&\le
\left(
\int_0^{1/\varepsilon}
+
\int_{1/\varepsilon}^{2/\varepsilon}
+
\int_{2/\varepsilon}^{T_B(\varepsilon)}
\right)
\| R(t)\|_{X} dt \\
&\lesssim
\varepsilon^{3/2}+
\varepsilon\mathcal{O}(\varepsilon^{1/4}),
\end{align*} 
which completes (\ref{est:int-R}).
\section{Proof of Theorem \ref{thm:main}
}\label{section:Proof of Theorem}
In this section, we prove Theorem \ref{thm:main}.
Assume that 
$2\le p<3$, $\mathrm{Im}\,\lambda>0$ $\varepsilon>0$ and  
$(1+x^2)\varphi\in\Sigma$. 
Suppose that $u=u(t,x)$ is the time-local solution 
to (\ref{IVP}) 
satisfying $u\in X(T)$ for some $T>0$. 
We immediately see the existence of such $u(t,x)$ 
(see Remark \ref{rem:other def}). 
Recall that positive numbers 
$A$ and $T_B(\varepsilon)$ are given by 
\begin{align*}
A^{-1}
=
(p-1)(\mathrm{Im}\lambda)
\sup_{\xi\in\mathbb{R}}
|\widehat\varphi(\xi)|^{p-1}
\text{\quad{and}\quad}
T_B(\varepsilon)
=
\left(\frac{(3-p)B}{2\varepsilon^{p-1}}
\right)^{2/(3-p)},
\end{align*}
respectively.   
Let $m(t,x)$ and $u_a(t,x)$ be functions defined in
Section \ref{section:Approximate solution}.
For $\varepsilon>0$, 
we fix $\delta=\varepsilon^{1/4}$. 
Then the inequality (\ref{est:u-a}) and 
(\ref{est:int-R}) hold.
We now prove the following lemma:
\begin{lem}\label{lem:key}
Let $2\le p<3$ , $\lambda\in\mathbb{C}$ and $B\in (0,A)$.
Assume that $\mathrm{Im}\,\lambda>0$ and 
$(1+x^2)\varphi\in\Sigma$.
Then there exists some $\varepsilon_0>0$ 
such that the following property holds for any 
$\varepsilon\in (0,\varepsilon_0]$:
If 
\begin{align*}
0<T<\min\{ T(\varepsilon),T_B(\varepsilon)\}
\quad\text{and}\quad
\sup_{0\le t\le T}
\| u_a(t)-u(t)\|_{X}
\le \varepsilon,
\end{align*} 
then we have 
\begin{align*}
\sup_{0\le t\le T}
\| u_a(t)-u(t)\|_{X}
\le \frac{\varepsilon}{2}.
\end{align*} 
\end{lem}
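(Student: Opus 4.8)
The plan is to run a bootstrap (continuity) argument for the difference $w:=u_a-u$, closing it with Gronwall's inequality whose exponential factor is kept bounded precisely by the choice of $T_B(\varepsilon)$. First I would subtract the identity $\mathcal{L}u=\mathcal{N}(u)$, satisfied by the solution, from $\mathcal{L}u_a=\mathcal{N}(u_a)+R$ (the definition of $R$), obtaining
\begin{align*}
\mathcal{L}w=\bigl(\mathcal{N}(u_a)-\mathcal{N}(u)\bigr)+R,\qquad w(0)=0,
\end{align*}
where $w(0)=0$ because $\chi(0)=1$ and $U(0)=\mathrm{Id}$ force $u_a(0)=\varepsilon\varphi=u(0)$. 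Since $\partial_x$ and $J$ commute with $\mathcal{L}=i\partial_t+\tfrac12\partial_x^2$, applying each $Z^\alpha$ and performing the standard $L^2$ energy estimate for the Schr\"odinger operator gives
\begin{align*}
\|w(t)\|_X\lesssim \int_0^t\|\mathcal{N}(u_a(\tau))-\mathcal{N}(u(\tau))\|_X\,d\tau+\int_0^{T_B(\varepsilon)}\|R(\tau)\|_X\,d\tau
\end{align*}
for $0\le t\le T<T_B(\varepsilon)$.

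Next I would estimate the two contributions. For the nonlinear difference, the hypothesis $\sup_{[0,T]}\|w\|_X\le\varepsilon$ together with (\ref{est:u-a}) yields $\|u(\tau)\|_X\le\|u_a(\tau)\|_X+\|w(\tau)\|_X\lesssim\varepsilon$, so Proposition \ref{prop:w1-w2} (applied with $w_1=u_a$, $w_2=u$) gives
\begin{align*}
\|\mathcal{N}(u_a(\tau))-\mathcal{N}(u(\tau))\|_X\lesssim \varepsilon^{p-1}(1+\tau)^{-(p-1)/2}\|w(\tau)\|_X.
\end{align*}
The remainder integral is controlled by (\ref{est:int-R}). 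Writing $g(t):=\|w(t)\|_X$, this leads to
\begin{align*}
g(t)\le C\bigl(\varepsilon^{3/2}+\varepsilon\mathcal{O}(\varepsilon^{1/4})\bigr)+C\varepsilon^{p-1}\int_0^t(1+\tau)^{-(p-1)/2}g(\tau)\,d\tau,
\end{align*}
so Gronwall's inequality produces
\begin{align*}
g(t)\le C\bigl(\varepsilon^{3/2}+\varepsilon\mathcal{O}(\varepsilon^{1/4})\bigr)\exp\!\Bigl(C\varepsilon^{p-1}\!\int_0^{T_B(\varepsilon)}(1+\tau)^{-(p-1)/2}\,d\tau\Bigr).
\end{align*}

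The decisive step, and the one I expect to be the crux, is that this Gronwall exponent stays bounded uniformly as $\varepsilon\to0$. Because $2\le p<3$ gives $(p-1)/2<1$, we have $\int_0^{T_B(\varepsilon)}(1+\tau)^{-(p-1)/2}\,d\tau\lesssim T_B(\varepsilon)^{(3-p)/2}$; and the definition (\ref{def:T-B}) of $T_B(\varepsilon)$ gives precisely $\varepsilon^{p-1}T_B(\varepsilon)^{(3-p)/2}=\tfrac{(3-p)B}{2}$, i.e. the nonlinear time satisfies $s(T_B(\varepsilon))=B<A$. Hence the exponential is bounded by a constant depending only on $p$ and $B$, whence
\begin{align*}
\sup_{0\le t\le T}\|w(t)\|_X\le C_{p,B}\,\varepsilon\bigl(\varepsilon^{1/2}+\mathcal{O}(\varepsilon^{1/4})\bigr).
\end{align*}
Finally, since $\varepsilon^{1/2}\to0$ and $\mathcal{O}(\varepsilon^{1/4})\to0$ as $\varepsilon\to0$ by (\ref{conv:O(delta)}), I would fix $\varepsilon_0>0$ so small that $C_{p,B}\bigl(\varepsilon^{1/2}+\mathcal{O}(\varepsilon^{1/4})\bigr)\le\tfrac12$ for every $\varepsilon\in(0,\varepsilon_0]$; this gives $\sup_{[0,T]}\|u_a-u\|_X\le\varepsilon/2$ and closes the bootstrap.
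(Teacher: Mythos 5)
Your proposal is correct and follows essentially the same route as the paper's own proof: the same energy inequality for $v=u_a-u$, the same application of (\ref{est:w1-w2}), (\ref{est:u-a}) and (\ref{est:int-R}), and the same Gronwall argument whose exponent stays bounded because $\varepsilon^{p-1}T_B(\varepsilon)^{(3-p)/2}=\tfrac{(3-p)B}{2}$ by (\ref{def:T-B}). Your explicit remarks that $u_a(0)=\varepsilon\varphi$ (so $v(0)=0$) and that the Gronwall exponent equals a constant depending only on $p$ and $B$ merely make explicit what the paper leaves implicit.
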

We first prove that 
we obtain Theorem \ref{thm:main}  
by using Lemma \ref{lem:key}.  
We remark that 
the proof is very similar to 
that of \cite{Sunagawa}. 
We fix $B\in (0,A)$ and $\varepsilon\in (0,\varepsilon_0]$.  
Since 
$\| u_a(0)-u(0)\|_X =0$ and 
$\| u_a(t)-u(t)\|_X$ is continuous with respect to $t$, 
there exists some $T^{\ast}>0$ such that 
\begin{align}\label{T ast}
\sup_{0\le t\le T^{\ast}}
\| u_a(t)-u(t)\|_{X}
\le \varepsilon.
\end{align} 
If $T^\ast \ge T_B(\varepsilon)$, 
then it follows from (\ref{est:u-a}) that 
\begin{align}\label{est:a priori}
\sup_{0\le t\le T_B(\varepsilon)}
\| u(t)\|_{X} 
\le 
\sup_{0\le t\le T_B(\varepsilon)}
\| u_a(t)\|_{X}
+
\sup_{0\le t\le T^\ast}
\| u_a(t)-u(t)\|_{X}
\le 
C\varepsilon, 
\end{align} 
where $C$ is a positive constant independent of $\varepsilon$.
The a priori estimate (\ref{est:a priori}) implies that 
$T(\varepsilon)> T_B(\varepsilon)$. 
On the other hand, 
assume that whenever $T^\ast>0$ satisfies (\ref{T ast}), 
$T^\ast$ is smaller than $T_B(\varepsilon)$. 
Then we see that 
\begin{align*}
\sup_{0\le t\le T^\ast}
\| u(t)\|_{X} 
\le 
\sup_{0\le t\le T_B(\varepsilon)}
\| u_a(t)\|_{X}
+
\sup_{0\le t\le T^\ast}
\| u_a(t)-u(t)\|_{X}
\le 
C\varepsilon, 
\end{align*} 
so that 
$T^\ast< \min\{ T(\varepsilon), T_B(\varepsilon) \}$. 
Then a positive number $T^{\ast\ast}$ defined by  
\begin{align*}
T^{\ast\ast} 
=
\max\left\{
T^\ast >0;\  
T^\ast \text{ satisfies (\ref{T ast})} 
\right\}
\end{align*} 
is well-defined and satisfies 
$0<T^{\ast\ast} <\min\{ T(\varepsilon), T_B(\varepsilon)\}$ 
and 
\begin{align}\label{contradiction}
\sup_{0\le t\le T^{\ast\ast}}
\| u_a(t)-u(t)\|_{X}
= \varepsilon
>\frac{\varepsilon}{2}.
\end{align} 
We see from Lemma \ref{lem:key} that 
\begin{align*}
\sup_{0\le t\le T^{\ast\ast}}
\| u_a(t)-u(t)\|_{X}
\le \frac{\varepsilon}{2}, 
\end{align*} 
which contradicts (\ref{contradiction}). 
We hence see that $T_B(\varepsilon)\le T(\varepsilon)$. 
In other words, we have 
\begin{align*}
\varepsilon^{2(p-1)/(3-p)}T(\varepsilon)\ge 
\left(
\frac{(3-p)B}{2} 
\right)^{2/(3-p)}.
\end{align*} 
Since $B\in (0,A)$ is arbitrary, 
Theorem \ref{thm:main} holds. \\

%\subsubsection{prove of lemma}
Let us prove Lemma \ref{lem:key}.
Put $v=u_a - u$. 
Then $v$ solves 
\begin{align*}
\left\{
  \begin{array}{ll}
\mathcal{L}Z^\alpha v
=
-
Z^\alpha\mathcal{N}(u_a+v)
+
Z^\alpha\mathcal{N}(u_a)+Z^\alpha R
&\text{in $[0,\infty)\times\mathbb{R}$,}\\
(Z^\alpha v)|_{t=0}=0
&\text{on $\mathbb{R}$.}\\
  \end{array}
\right.
\end{align*}  
The standard energy inequality implies that 
\begin{align}\label{ineq:standard}
\| v(t)\|_{X}
\lesssim
\int_0^t
\| \mathcal{N}(u_a(\tau)+v(\tau))
-
\mathcal{N}(u_a(\tau))\|_{X}
d\tau
+
\int_0^t
\| R(\tau) \|_X
d\tau.
\end{align} 
From (\ref{est:w1-w2}),
the assumption of Lemma \ref{lem:key},
(\ref{est:u-a}) and 
(\ref{est:int-R}), 
we obtain 
\begin{align*}
\| v(t)\|_{X}
\lesssim
\varepsilon^{p-1}
\int_0^t
(1+\tau)^{-(p-1)/2}
\| v(\tau)\|_{X}
d\tau
+
\varepsilon^{3/2}+
\varepsilon\mathcal{O}(\varepsilon^{1/4}).
\end{align*} 
By Gronwall's lemma, 
we see that 
\begin{align*}
\| v(t)\|_{X}
&\le 
C
\left(\varepsilon^{3/2}+
\varepsilon\mathcal{O}(\varepsilon^{1/4})\right)
\exp\left( C
\varepsilon^{p-1}
\int_0^{T_B(\varepsilon)} 
(1+\tau)^{-(p-1)/2}
d\tau
\right) \\
&\le
C
\left(\varepsilon^{3/2}+
\varepsilon\mathcal{O}(\varepsilon^{1/4})\right)
\exp\left( C
\varepsilon^{p-1}
T_B(\varepsilon)^{(3-p)/2}
\right) \\
&\le
C
\left(\varepsilon^{3/2}+
\varepsilon\mathcal{O}(\varepsilon^{1/4})\right)
\exp\left( C
\varepsilon^{p-1}
\varepsilon^{-(p-1)}
\right) \\
&\le 
C\left(\varepsilon^{3/2}+
\varepsilon\mathcal{O}(\varepsilon^{1/4})\right)
\end{align*} 
for some constant $C$ independent of $\varepsilon$.
Choosing $\varepsilon_0>0$ such that 
$C(\varepsilon_0^{1/2}+
\mathcal{O}(\varepsilon_0^{1/4}))\le 1/2$, 
we have $\| v(t)\|_{X}\le \varepsilon/2$,
which completes the proof of Lemma \ref{lem:key}.
Hence Theorem \ref{thm:main} holds.
\subsection*{Acknowledgments}
The author would like to thank 
Professor Hideo Kubo for the helpful advice 
and the referee 
for pointing out some gaps.

%\subsubsection*{life span}

\end{document}